\newtheorem{thm}{Theorem} 
\newtheorem{lem}[thm]{Lemma}
\newtheorem{cor}[thm]{Corollary}
\theoremstyle{definition}
\newtheorem{defn}{Definition}
\newcommand{\Ne}{\mathbb N}
\newcommand{\NN}{\mathcal N}
\renewcommand{\Re}{\mathbb R}
\newcommand{\Red}{\Re^d}
\newcommand{\Sed}{\mathbb S^{d-1}}
\renewcommand{\epsilon}{\varepsilon}
\renewcommand{\phi}{\varphi}
\newcommand{\pack}[2]{P\left(#1,#2\right)}
\newcommand{\norm}[1]{\left\lVert#1\right\rVert}
\newcommand{\abs}[1]{\left\lvert#1\right\rvert}
\newcommand{\setbuilder}[2]{\{#1:#2\}}
\newcommand{\bd}{\operatorname{bd}}
\newcommand{\inter}{\operatorname{int}}
\newcommand{\vol}{\operatorname{vol}}
\newcommand{\un}[1]{#1^{\wedge}}
\newcommand{\card}[1]{\left\lvert#1\right\rvert}
\newcommand{\epsi}{\varepsilon}
\title{Arrangements of homothets of a convex body}
\author[M. Nasz\'odi, J. Pach, K. J. Swanepoel]{M\'arton Nasz\'odi \and J\'anos 
Pach \and 
Konrad Swanepoel}
\address{Department of Geometry, Lorand E\"otv\"os University, Pazm\'any 
P\'eter S\'etany 1/C Budapest, Hungary 1117}\email{marton.naszodi@math.elte.hu}
\address{EPFL Lausanne and R\'enyi Institute, Budapest}\email{pach@cims.nyu.edu}
\address{Department of Mathematics, London School of Economics and Political 
Science, Houghton Street, London WC2A 2AE, United 
Kingdom}\email{k.swanepoel@lse.ac.uk}
\keywords{}\subjclass[2010]{}
\thanks{M\'arton Nasz\'odi acknowledges the support of the J\'anos Bolyai 
Research Scholarship of the Hungarian Academy of Sciences, and
the National Research, Development, and Innovation Office, NKFIH 
Grants PD104744 and K119670. Part of this paper was 
written when Swanepoel visited EPFL in April 2015. Research by J\'anos Pach was 
supported in part by Swiss National Science Foundation grants 200020-144531 and 
200020-162884.}
\begin{document}
\begin{abstract}
Answering a question of F\"uredi and Loeb (1994), we show that the maximum 
number of pairwise intersecting homothets of a $d$-dimensional centrally 
symmetric convex body~$K$, none of which contains the center of another in its interior,
is at most $O(3^d d\log d)$.
If $K$ is not necessarily centrally symmetric and the role of its center is played by its centroid, then the above bound can be replaced by $O(3^d\binom{2d}{d}d\log d)$.
We establish analogous results for the case where the center is defined as an arbitrary point in the interior of $K$.
We also show that in the latter case, one can always find families of at least $\Omega((2/\sqrt{3})^d)$ translates of $K$ with the above property.
\end{abstract}
\maketitle

\section{Introduction}\label{sec:intro}

A \emph{convex body} $K$ in the $d$-dimensional Euclidean space $\Red$ is a 
compact convex set with non-empty interior, and is \emph{$o$-symmetric} if $K=-K$.
A (positive) \emph{homothet} of $K$ is a set 
of the form $\lambda K+v:=\setbuilder{\lambda k+v}{k\in K}$, where $\lambda>0$ is the 
homothety ratio, and $v\in\Red$ is a translation vector.
We investigate 
arrangements of homothets of convex bodies. The starting point of our 
investigations is Problem~4.4 of a paper of F\"uredi and Loeb~\cite{FL94}:
\begin{quote}\itshape
Is it true that for any 
centrally symmetric body $K$ of dimension $d, d\geq d_0$, the number of 
pairwise intersecting homothetic copies of $K$ which do not contain each 
other's centers is at most $2^d$? 
\end{quote}
There exist $8$ such homothets of 
the circular disc \cites{MM92, HJLM93} (Fig.~\ref{circles}).
\begin{figure}[t]
\centering
\definecolor{qqzzqq}{rgb}{0.,0.6,0.}
\definecolor{zzttqq}{rgb}{0.6,0.2,0.}
\definecolor{ccqqqq}{rgb}{0.8,0.,0.}
\definecolor{qqqqff}{rgb}{0.,0.,1.}
\begin{tikzpicture}[line cap=round,line join=round,>=triangle 45,scale=10, 
thick]
\clip(6,0.27) rectangle (6.8,0.95);
  \draw [color=ccqqqq,fill opacity=0.25] 
(6.5401326852420025,4.1895795177247654) circle (3.5276227841994934cm);
  \draw [color=ccqqqq,fill opacity=0.25] (9.904397308362515,1.5591310036741608) 
circle (3.516484136725196cm);
  \draw [color=ccqqqq,fill opacity=0.25] (8.414186133020543,-2.437344421106587) 
circle (3.5179875872955026cm);
  \draw [color=ccqqqq,fill opacity=0.25] (4.1241,-2.392) circle (3.735cm);
  \draw [color=ccqqqq,fill opacity=0.25] 
(2.9726574473021294,1.7397626612913697) circle (3.5690422480878397cm);
  \draw [color=qqzzqq,fill=qqzzqq,fill opacity=0.1] 
(6.503814035722377,0.6615416297673377) circle (0.15341799622043542cm);
  \draw [color=qqzzqq,fill=qqzzqq,fill opacity=0.1] 
(6.359978517495642,0.6030252918603594) circle (0.1548796357153899cm);
  \draw [color=qqzzqq,fill=qqzzqq,fill opacity=0.1] 
(6.482572911772887,0.5077182480629958) circle (0.15487792157398633cm);
  \draw [fill=qqqqff] (6.503814035722377,0.6615416297673377) circle (0.1pt);
  \draw [fill=qqqqff] (6.359978517495642,0.6030252918603594) circle (0.1pt);
  \draw [fill=qqqqff] (6.482572911772887,0.5077182480629958) circle (0.1pt);
\end{tikzpicture}
\caption{A pairwise intersecting strict Minkowski arrangement of $8$ circles 
(after Harary et al.\ \cite{HJLM93})}\label{circles}
\end{figure}
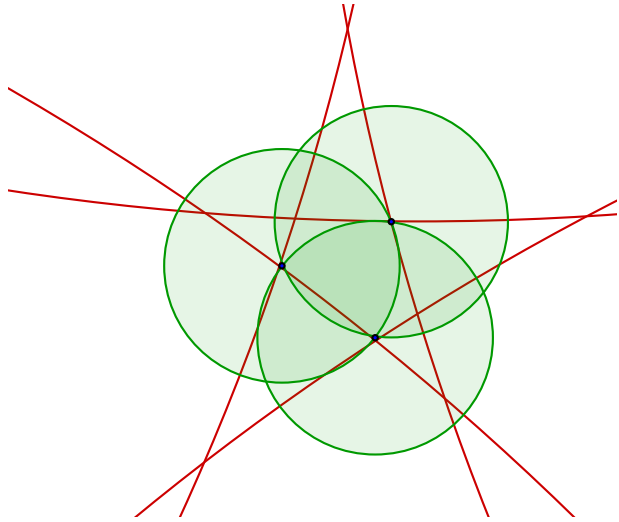
A \emph{Minkowski arrangement} of an $o$-symmetric convex body $K$ is defined to be a family $\{v_i+\lambda_i K\}$ 
of positive homothets of $K$ such that none of the homothets contains the center of any other homothet in its interior. This notion was introduced by L. Fejes T\'oth \cite{FT65} in the context of Minkowski's fundamental theorem on the minimal determinant of a packing lattice for a symmetric convex body, and further studied in the papers \cites{FT67, FT99, BSz04}, and in connection to the Besicovitch covering theorem in \cite{FL94}.
Recently, Minkowski arrangements have been used to study a problem arising in the design of wireless networks~\cite{NSS17}.

We also define a \emph{strict Minkowski arrangement} of $K$ 
to be a family $\{v_i+\lambda_i K\}$ of positive homothets of $K$
such that none of the homothets contains the center of any other homothet.
We write $\kappa(K)$ ($\kappa'(K)$) for the largest number of homothets that a pairwise intersecting (strict) 
Minkowski arrangement of $K$ can have.

Thus, the question of F\"uredi and Loeb may be phrased as follows: \emph{Is it true 
that 
$\kappa'(K)\leq 2^d$ for any $o$-symmetric convex body $K$ in $\Red$ with 
$d$ sufficiently large?}
A construction of Talata \cite{Ta05} implies that the answer to this question is negative: there exists a $d$-dimensional convex body $K$ such that $\kappa'(K)\geq \frac{16}{35}\sqrt{7}^d$ for all $d\geq 3$
(see Section~\ref{section:Hadwiger}).
The question now becomes to find an upper bound for $\kappa'(K)$.
It follows from \cite{FL94}*{Theorem 2.1} that $\kappa'(K)\leq\kappa(K)\leq 5^d$ for any $o$-symmetric $d$-dimensional convex body $K$.
Our first result is the following improvement.
\begin{thm}\label{thm1}
For any $d$-dimensional $o$-symmetric convex body $K$,
 \[
  \kappa^\prime(K)\leq \kappa(K)\leq O(3^d d\log d).
 \]
\end{thm}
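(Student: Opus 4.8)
The first inequality $\kappa'(K)\le\kappa(K)$ is immediate, since every strict Minkowski arrangement is a Minkowski arrangement; so it suffices to bound $\kappa(K)$. I would begin by setting up coordinates so that, writing $\norm{\cdot}_K$ for the norm whose unit ball is $K$ (a genuine norm because $K=-K$), a pairwise intersecting Minkowski arrangement $\{v_i+\lambda_i K\}_{i=1}^n$ is encoded by
\[
  \max(\lambda_i,\lambda_j)\le\norm{v_i-v_j}_K\le\lambda_i+\lambda_j\qquad(i\ne j),
\]
the left-hand inequality being the Minkowski condition and the right-hand one the intersection condition. After rescaling and translating we may assume $\lambda_1=\max_i\lambda_i=1$ and $v_1=o$, so $H_1:=v_1+\lambda_1K=K$. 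Then every center lies in $2K$, and the shrunk homothets $G_i:=v_i+\tfrac12\lambda_iK$ have pairwise disjoint interiors (since $\norm{v_i-v_j}_K\ge\max(\lambda_i,\lambda_j)\ge\tfrac12(\lambda_i+\lambda_j)$) and are all contained in $\tfrac52K$. A volume comparison already gives $\sum_i\lambda_i^d\le5^d$, which bounds the number of homothets whose ratio is bounded below by a constant, but says nothing about the small ones.

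The heart of the matter is therefore the homothets of very small ratio. Since each $H_i$ meets $K$ while $v_i\notin\inter K$, the center $v_i$ lies within $K$-distance $\lambda_i$ of $\bd K$; and since the whole family is pairwise intersecting, $\norm{v_i-v_j}_K\le 2$ for all $i,j$, so all centers are tightly clustered. Thus the small homothets are squeezed into a thin collar of $\bd K$ and bunched together — this is exactly where the pairwise intersecting hypothesis buys control (for a mere Minkowski arrangement, Talata's construction shows the count can be exponential with a larger base). The plan is to group the homothets into dyadic size classes and to show: (a) within one size class, pairwise intersecting together with the Minkowski separation forces all members into a bounded ball while keeping them well separated, so each class has at most $O(c^d)$ members for an absolute constant $c$ (a packing bound); and (b) only boundedly many size classes can be active near any given location, because the homothets of a fixed class, being pairwise intersecting and Minkowski-separated at that scale, act like a net at that scale and their union blocks the centers of all much smaller homothets. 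Quantitatively, I would cover the relevant region — a constant dilate of $K$, or the collar of $\bd K$ — by translates of a fixed fraction of $K$ using the economical covering theorem of Rogers and Rogers--Zong; since $\vol(\tfrac32K)/\vol(\tfrac12K)=\vol(3K)/\vol(K)=3^d$ when $K=-K$, such a covering uses $O(3^dd\log d)$ cells, and one then shows each cell captures only $O(1)$ homothets by playing the two displayed inequalities against each other.

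The main obstacle will be step (b): a Minkowski arrangement may a priori span arbitrarily many scales, so one cannot bound the number of scales outright but must show that the whole multi-scale structure is already controlled by a single economical covering. Concretely, I expect the technical core to be a local statement of the form: if many homothets of a pairwise intersecting Minkowski arrangement all meet a common translate of $\tfrac12K$, then their centers (together with chosen points where they meet $K$) can be threaded through a configuration of bounded size, forcing $O(1)$ of them. Making the constants in this local argument small enough that the global count comes out as $O(3^dd\log d)$ — rather than the $O(10^d)$-type bound that the crude packing estimate alone would give — is the delicate point, and is where the logarithmic Rogers--Zong overhead, rather than a purely volumetric estimate, becomes essential.
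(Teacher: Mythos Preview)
Your setup and the displayed inequalities $\max(\lambda_i,\lambda_j)\le\norm{v_i-v_j}_K\le\lambda_i+\lambda_j$ are correct, and the idea of a scale decomposition is indeed the right one. But the specific mechanism you propose for controlling the scales does not work, and the paper's argument is organized quite differently. First, the paper normalizes so that $\lambda_1=\min_i\lambda_i=1$ (not the maximum); this matters because the tail of \emph{large} homothets can be handled, while your tail of arbitrarily small homothets cannot. Second, the paper does not use dyadic classes: with dyadic classes the within-class packing ratio is $\max\norm{v_i-v_j}/\min\norm{v_i-v_j}\le 3$, giving only $4^d$ per class by the packing bound. Instead one takes $\delta=1/d$ and classes $\lambda_i\in[(1+\delta)^{j-1},(1+\delta)^j)$; within such a class the ratio is at most $2(1+\delta)$, so each class has at most $P(K,2(1+1/d))\le(3+2/d)^d=O(3^d)$ members. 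The $d\log d$ factor is simply the number $N$ of such classes needed to reach $(1+1/d)^N\ge d$; it has nothing to do with Rogers--Zong covering.

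The genuine gap is your step~(b). The claim that homothets of one scale ``block the centers of all much smaller homothets'' is not true: the Minkowski condition only forbids a center from lying in the \emph{interior} of another homothet, so centers can sit on boundaries and small homothets can thread between larger ones. Consequently there is no $O(1)$-per-cell bound of the kind you envisage, and a single Rogers--Zong cell can meet homothets at arbitrarily many scales. What the paper does instead is collect all homothets with $\lambda_i\ge(1+\delta)^N$ into a single residual class $I_\infty$ and bound $|I_\infty|$ directly: since $o\notin v_i+\lambda_i\inter K$ and $v_i+\lambda_i K$ meets the unit-scale $K$, one has $\lambda_i\le\norm{-v_i}_K\le\lambda_i+1$, and the ``bow-and-arrow'' inequality (if $\norm{a}_K\ge\norm{b}_K>0$ then $\norm{\un a-\un b}_K\ge(\norm{a-b}_K-\norm{a}_K+\norm{b}_K)/\norm{b}_K$) shows that the unit normalizations $\un{(-v_i)}\in\bd K$ are pairwise at distance at least $1-\epsi$ with $\epsi=(1+\delta)^{-N}\le 1/d$. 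This gives $|I_\infty|\le P(K,2/(1-\epsi))\le P(K,2(1+\delta))$ as well, and summing the $N+1$ classes yields $O(3^d d\log d)$. Your proposal is missing this normalization-and-bow-and-arrow step, which is the actual engine of the proof.
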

It is easy to see that for the $d$-cube $C^d$, $\kappa(C^d)= 3^d$, which shows that 
the upper bound for $\kappa(K)$ in
Theorem~\ref{thm1} is sharp up to the $O(d\log d)$ factor.
We will in fact prove a strengthening of this theorem in Theorem~\ref{thm:kappauppersymm} below.
However, we have no better upper bound for $\kappa'(K)$ than that for~$\kappa(K)$.

Theorem~\ref{thm1} implies that if we have a sequence of balls $B_1,B_2,\ldots,B_n$ (of not necessarily equal radii) in a $d$-dimensional normed space, such that for any $1\leq i<j\leq n$, the center of $B_j$ is on the boundary of $B_i$, then $n\leq O(6^d d^2\log d)$ (Corollary~\ref{cor:chainofballs} in Section~\ref{sec:nonsymmkappa}). 
This has recently been improved to an almost tight bound by Polyanskii \cite{Polyanskii}.
This result has an application to $k$-distance sets~\cite{Swanepoel}.

We next consider convex bodies that are not 
necessarily $o$-symmetric, and extend the notion of Minkowski arrangement as follows.
In the absence of a center, we choose a fixed reference point interior to the convex body.
\begin{defn}
Let $K$ be a convex body and $p$ a fixed point in the interior of $K$. 
A \emph{Minkowski arrangement of $K$ with respect to $p$} is a 
family $\{v_i+\lambda_i K\}$ 
of positive homothets of $K$ with the property that $v_i+p$ is not in 
$v_j+\lambda_j\inter(K)$, for any distinct $i$ and $j$.
We denote the largest number of homothets that a pairwise intersecting 
Minkowski arrangement of $K$ with respect to $p$ can have by~$\kappa(K,p)$.

Similarly, we define a \emph{strict Minkowski arrangement of $K$ with respect 
to $p$} 
to be a family $\{v_i+\lambda_i K\}$ of positive homothets of $K$
such that $v_i+p\notin v_j+\lambda_j K$, for any $i\neq j$, and we write 
$\kappa'(K,p)$ for the largest number of homothets that a pairwise intersecting strict 
Minkowski arrangement of $K$ with respect to $p$ can have.
\end{defn}
Thus, when $K$ is $o$-symmetric, $\kappa(K)=\kappa(K,o)$ and $\kappa'(K)=\kappa'(K,o)$.
For bodies that are not $o$-symmetric, we also need to measure in some way how far they are from being $o$-symmetric.
\begin{defn}
Let $K$ be a convex body with $p$ in its interior.
Define $\theta(K,p)$, the \emph{measure of asymmetry of $K$ with respect to 
$p$} to be $\theta(K,p):=\inf\setbuilder{\theta}{p-K\subseteq\theta(K-p)}$.
\end{defn}

(Gr\"unbaum \cite{Gru63}*{Section~6.1} defines a quantity similar to $\theta$.)
Our next result generalizes~Theorem~\ref{thm1}.
\begin{thm}\label{thm2}
Let $K$ be a convex body in $\Red$ with $p\in\inter(K)$. Then 
\[\kappa'(K,p)\leq\kappa(K,p)\leq\left(\frac{3}{2}\right)^d\frac{\vol(K-K)}{
\vol((K-p)\cap (p-K))} O(d(\log d + \log\theta(K,p))).\]

If $c$ is the centroid of $K$ then \[\kappa'(K,c)\leq\kappa(K,c)\leq 3^d\binom{2d}{d} O(d\log d).\]
\end{thm}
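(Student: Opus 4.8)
The plan is to obtain this as a consequence of the general estimate above, applied with $p=c$ the centroid of $K$. Since $\kappa'(K,c)\le\kappa(K,c)$ is immediate from the definitions, it suffices to bound $\kappa(K,c)$, and by the first part of the theorem
\[
\kappa(K,c)\le\left(\frac32\right)^d\frac{\vol(K-K)}{\vol\bigl((K-c)\cap(c-K)\bigr)}\,O\bigl(d(\log d+\log\theta(K,c))\bigr).
\]
So everything reduces to two facts about the centroid: first, $\theta(K,c)\le d$, and second, $\vol(K-K)\le 2^d\binom{2d}{d}\,\vol\bigl((K-c)\cap(c-K)\bigr)$. Granting both, and using $\log d+\log\theta(K,c)\le 2\log d$, the right-hand side becomes $\left(\frac32\right)^d\cdot 2^d\binom{2d}{d}\cdot O(d\log d)=3^d\binom{2d}{d}\,O(d\log d)$, which is the claim.

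The first fact is the classical statement that the centroid has measure of asymmetry at most $d$: after translating $c$ to the origin it reads $-K\subseteq dK$, and it is sharp for simplices (this is closely related to the quantity of Gr\"unbaum \cite{Gru63}). For the second I would bound numerator and denominator separately. The numerator is controlled by the Rogers--Shephard inequality, $\vol(K-K)\le\binom{2d}{d}\vol(K)$. For the denominator I would invoke the known lower bound
\[
\vol\bigl((K-c)\cap(c-K)\bigr)\ge 2^{-d}\vol(K),
\]
valid whenever $c$ is the centroid of $K$ --- equivalently, $\vol(K\cap(-K))\ge 2^{-d}\vol(K)$ whenever the centroid of $K$ lies at the origin. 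Dividing the two displays gives exactly the required inequality, and assembling everything yields the stated bound.

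I expect this last volume estimate to be the only genuine obstacle; the rest is routine bookkeeping with the $O(\cdot)$-notation. It is worth stressing that the constant is not negotiable: the factor $2^{-d}$ is precisely what converts $\left(\tfrac32\right)^d$ into $3^d$, so any weaker bound of the form $\vol\bigl((K-c)\cap(c-K)\bigr)\ge c_0^{-d}\vol(K)$ with $c_0>2$ would produce a strictly larger exponential base. A tempting but insufficient route is to note that the self-correlation $x\mapsto\vol\bigl(K\cap(2x-K)\bigr)$ is log-concave, is supported on $K$, has barycenter $c$, and satisfies $\int_K\vol\bigl(K\cap(2x-K)\bigr)\di x=2^{-d}\vol(K)^2$, so its average over $K$ equals $2^{-d}\vol(K)$; but recovering the pointwise value at the barycenter from the average (as in Fradelizi-type bounds) costs a further factor $e^{-d}$, so one should instead appeal to the sharp estimate directly.
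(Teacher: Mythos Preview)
Your proposal is correct and follows essentially the same route as the paper: reduce the centroid case to the first inequality of the theorem, then control the volume ratio by combining the Rogers--Shephard inequality $\vol(K-K)\le\binom{2d}{d}\vol(K)$ with the bound $\vol\bigl((K-c)\cap(c-K)\bigr)\ge 2^{-d}\vol(K)$, and use Minkowski's $\theta(K,c)\le d$ to absorb $\log\theta(K,c)$ into $O(\log d)$. The paper attributes the crucial $2^{-d}$ volume estimate to Milman and Pajor \cite{MiPa00}*{Corollary~3}; your closing remarks correctly anticipate that this is the one nontrivial ingredient and that any weaker exponential constant would spoil the final base~$3^d$.
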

There exists a $d$-dimensional convex body $K$ 
with centroid $c$ such that $\kappa(K,c)\geq\sqrt{10}^d$ (detailed at the end of Section~\ref{section:packing}).
We prove a strengthening of Theorem~\ref{thm2} in Theorem~\ref{thm:kappauppernonsymm} below.

When $K$ is $o$-symmetric, Arias-de Reyna, Ball, and Villa \cite{ABV98}*{Theorem~1} derived a lower bound $\Omega((2/\sqrt{3})^d)$ for 
the strict Hadwiger number $H'(K)$ (see Definition~\ref{defn:Hadwiger} in Section~\ref{section:Hadwiger}), which implies that $\kappa'(K) = \Omega((2/\sqrt{3})^d)$.
We show the same lower bound in the non-symmetric case.
\begin{thm}\label{thm:kappalowerbdassym}
 Let $K$ be a convex body with $p\in\inter(K)$. Then 
 $\kappa'(K,p)>c(2/\sqrt{3})^d$
 for some universal constant $c>0$.
\end{thm}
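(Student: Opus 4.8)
\emph{The plan} is to reduce the statement to a single concentration estimate and then run a probabilistic deletion argument. First I would replace $K$ by $K-p$, so that $p=o\in\inter(K)$ and the centre of a positive homothet $v+\lambda K$ is $v+\lambda o=v$; it then suffices to exhibit $m>c(2/\sqrt3)^d$ points $v_1,\dots,v_m\in\Red$ with $v_i-v_j\in(K-K)\setminus\bigl(K\cup(-K)\bigr)$ for all $i\neq j$, since the unit-ratio homothets $v_i+K$ will then pairwise intersect (because $v_i-v_j\in K-K$) and form a strict Minkowski arrangement with respect to $o$ (because $v_i-v_j\notin K$ for every ordered pair). The point of using translates rather than general homothets is that if one also requires $v_1,\dots,v_m\in K$, then $v_i-v_j\in K-K$ holds for free, and only the strictness condition $v_i-v_j\notin K\cup(-K)$ remains to be arranged.

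The key ingredient I would need is the bound
\[
  \Pr\bigl[\,x-y\in K\,\bigr]\;\le\;C\left(\tfrac{\sqrt3}{2}\right)^{d},
\]
valid for independent, uniformly random $x,y\in K$ with $C$ an absolute constant; equivalently, $\vol(K)^{-2}\int_K\vol\bigl(K\cap(y+K)\bigr)\di y\le C(\sqrt3/2)^d$, i.e.\ the covariogram of $K$ puts only an exponentially small fraction of its mass over $K$ itself. For $o$-symmetric $K$ the event $x-y\in K$ is just $\norm{x-y}_K\le1$, and this estimate is the probabilistic engine behind the Arias-de Reyna--Ball--Villa bound quoted before the theorem; to handle arbitrary convex $K$ I would either check that their Brunn--Minkowski/symmetrisation argument never uses central symmetry (and so applies verbatim, with the Euclidean ball extremal and a spherical-cap computation supplying the base $\sqrt3/2$ together with an inverse-polynomial gain in~$d$), or reprove the estimate directly. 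This is the step I expect to be the main obstacle. It is also why one cannot simply symmetrise and cite ABV: the natural symmetrisations of $K$, such as $\tfrac12(K-K)$, strictly contain every translate of $K$ when $K$ is not symmetric, so there is no way to transfer an arrangement while keeping both the intersection and the strictness conditions intact --- one is forced to argue at the level of the probability estimate.

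With the estimate in hand the conclusion is routine. By exchangeability $\Pr[x-y\in -K]=\Pr[y-x\in K]=\Pr[x-y\in K]$, hence $\Pr[x-y\in K\cup(-K)]\le2C(\sqrt3/2)^d$. I would pick $N:=\bigl\lfloor\tfrac1{2C}(2/\sqrt3)^d\bigr\rfloor$ points $v_1,\dots,v_N\in K$ independently and uniformly at random; the expected number of pairs $\{i,j\}$ with $v_i-v_j\in K\cup(-K)$ is at most $\binom N2\cdot2C(\sqrt3/2)^d<N/2$, so some outcome has fewer than $N/2$ such pairs, and deleting one point from each leaves at least $N/2$ points --- automatically distinct, since $o\in K$ --- all of whose pairwise differences lie in $(K-K)\setminus(K\cup(-K))$. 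By the first paragraph these give a pairwise intersecting strict Minkowski arrangement of $K$ of that size, so $\kappa'(K,p)\ge N/2>c(2/\sqrt3)^d$ for a suitable universal $c>0$ (bounded $d$ being trivial, as $\kappa'(K,p)\ge1$ always).
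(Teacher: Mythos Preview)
Your proposal is correct and follows essentially the same route as the paper. The paper translates so that $p=o$, invokes a non-symmetric version of the Arias-de Reyna--Ball--Villa estimate (stated there as a separate theorem, with the observation that the only change in the proof is replacing one factor $\chi_K$ by $\chi_{-K}$ in the threefold convolution), and then runs the identical random-selection-plus-deletion argument on uniform points of $K$; the resulting translates are written as $-x_i+K$ rather than your $v_i+K$, and pairwise intersection is argued via the common point $o$ rather than via $v_i-v_j\in K-K$, but these are cosmetic differences.
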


We also prove a lower bound for a variant $h'(K)$ (see Definition~\ref{defn:Hadwiger2} in Section~\ref{section:packing}) 
of the strict Hadwiger number $H'(K)$ for $K$ that is not $o$-symmetric.

\begin{thm}\label{thm:hlower}
 Let $K$ be a convex body in $\Red$ with $o\in\inter(K)$. Then for sufficiently large $d$,
 \[
\kappa'(K,o)
\geq
h'(K)\geq 
\frac{1}{4d^2}\left(\frac{2}{\sqrt{3}}\right)^d.
 \]
\end{thm}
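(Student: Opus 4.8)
The plan is to adapt the probabilistic construction of Arias-de Reyna, Ball, and Villa to the non-symmetric setting, replacing the body~$K$ by a suitable symmetrization in order to control distances measured in the asymmetric "gauge" of~$K$. First I would recall the definition of $h'(K)$ from Section~\ref{section:packing}: it should count the largest number of translates $v_i+K$ whose interiors are pairwise disjoint in the Minkowski-arrangement sense but which all touch (or lie suitably close to) a common copy, equivalently a strict Hadwiger-type quantity for translates rather than homothets. Since every such configuration of translates is in particular a pairwise intersecting strict Minkowski arrangement of~$K$ with respect to~$o$, the inequality $\kappa'(K,o)\geq h'(K)$ will be immediate, so the content is the lower bound $h'(K)\geq \frac{1}{4d^2}(2/\sqrt3)^d$.

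The core idea is as follows. Consider the difference body $DK:=\tfrac12(K-K)$, which is $o$-symmetric, and work with the norm $\norm{\cdot}$ it induces. A family of unit vectors $u_1,\dots,u_N\in\Sed$ (unit in $\norm{\cdot}$) with all pairwise distances $\norm{u_i-u_j}$ close to $\sqrt3$ gives, after scaling, a packing-type configuration: one places the translate $v_i+K$ with $v_i$ a small multiple of~$u_i$. The condition that $o=v_i+o$ avoids the interior of $v_j+K$ translates into $v_i-v_j\notin\inter K$, which by the choice of the common norm and a short computation reduces to a lower bound on $\norm{u_i-u_j}$ of the form $\geq\sqrt3-o(1)$; the condition that the translates all meet a fixed body reduces to an upper bound on $\norm{v_i}$. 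The existence of $N\geq \frac{1}{4d^2}(2/\sqrt3)^d$ such nearly-equilateral unit vectors is exactly the content of the Arias-de Reyna--Ball--Villa estimate (a second-moment / deletion argument on random points of~$\Sed$): choosing $\Omega((2/\sqrt3)^d)$ random unit vectors, the expected number of "bad" pairs (those closer than $\sqrt3-\epsilon$) is small, so one can delete one endpoint from each bad pair and still retain a positive proportion, the $\tfrac{1}{4d^2}$ factor absorbing the polynomial losses from the deletion step and from passing between the Euclidean and $\norm{\cdot}$ metrics via John's theorem.

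The main obstacle I expect is bookkeeping the asymmetry: in the symmetric case one directly uses that $u\notin\inter K\iff \norm{u}\geq 1$, but here the relevant condition $v_i-v_j\notin\inter K$ is governed by the (non-symmetric) Minkowski gauge of~$K$, not by $\norm{\cdot}$, so one must show that for the specific vectors $v_i$ arising from the construction the gauge and the symmetrized norm $\norm{\cdot}$ agree up to the $o(1)$ error needed to keep the distance threshold above the limiting value. I would handle this by observing that the vectors $v_i-v_j$ all have nearly equal $\norm{\cdot}$-length and point in "generic" directions, so that the asymmetry only perturbs the gauge by a controlled amount; since we only need pairwise distances bounded below by $\sqrt3-\epsilon$ rather than exactly $\sqrt3$, and the exponential base $2/\sqrt3$ is insensitive to such an~$\epsilon$, this perturbation is harmless for $d$ large. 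The remaining steps — verifying the common-intersection condition and assembling the counting — are routine once the gauge/norm comparison is in place.
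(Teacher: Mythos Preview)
Your proposal does not work, for two linked reasons. First, the Arias-de Reyna--Ball--Villa argument does not produce unit vectors at pairwise distance close to~$\sqrt{3}$; it produces roughly $(2/\sqrt{3})^d$ random points in the body at pairwise distance larger than~$1$. The number $2/\sqrt{3}$ is the \emph{count}, coming from $F(1)\le(3/4)^{d/2}$, not a target distance, so the ``$\sqrt{3}-\epsi$'' threshold in your sketch has no source. Second, symmetrising to $DK=\tfrac12(K-K)$ and transferring back to the asymmetric gauge loses a factor you cannot afford: in general one only has $\norm{x}_K\ge\tfrac12\norm{x}_{DK}$ (from $K\subseteq K-K=2DK$), so a bound $\norm{u_i-u_j}_{DK}>1$, which is all ABV yields, gives merely $\norm{u_i-u_j}_K>\tfrac12$, not $>1$. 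There is no ``generic direction'' escape: already for $K=[-\epsi,1]\subset\Re$ the ratio $\norm{x}_K/\norm{x}_{DK}$ equals $(1+\epsi)/2$ in the positive direction. John's theorem plays no role here, and $h'(K)$ (Definition~\ref{defn:Hadwiger2}) asks for points on $\bd K$ with $\norm{p_i-p_j}_K>1$, not for a family of translates, so your outline does not even produce objects of the required type.

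The paper proceeds differently. It first records (Theorem~\ref{thm:ABVnonsymmetric}) that the ABV concentration bound holds verbatim for the \emph{asymmetric} gauge $\norm{\cdot}_K$, with the same convolution proof. Assuming first that $o$ is the centroid (so $\theta(K,o)\le d$ by Lemma~\ref{lem:asym}), one samples $k$ points uniformly in $K$, deletes the ``short'' ones ($\norm{x_i}_K\le 1-\delta$) and one endpoint of each ``close'' pair ($\norm{x_i-x_j}_K\le 1+(d+1)\delta$), and then normalises the survivors to $\bd K$. The centroid bound enters precisely here: $\norm{\un{x}_i-x_i}_K<\delta$ but only $\norm{x_i-\un{x}_i}_K<d\delta$ (since $\norm{-x}_K\le d\norm{x}_K$), which forces $\delta\sim 1/d^2$ and explains the factor $1/d^2$ in the statement. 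The case of arbitrary $o\in\inter(K)$ is then reduced to the centroid case by the topological Lemma~\ref{lem:cpl}: one finds a hyperplane $H$ such that the orthogonal projection $\pi(K)$ has centroid at $o$, applies the centroid case inside $H$, and lifts the boundary points back to $\bd K$ using $\norm{x}_K\ge\norm{\pi(x)}_{\pi(K)}$. None of these three ingredients---the non-symmetric ABV, the normalisation controlled by $\theta\le d$, and the projection lemma for general $o$---appears in your plan.
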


The paper is organized as follows.  In Section~\ref{section:Hadwiger}, we 
apply a result of Talata to give a negative answer to the question of F\"uredi and 
Loeb quoted at the beginning of the Introduction. In Section~\ref{section:packing}, we state two stronger versions of Theorem~\ref{thm1}
(Theorems~\ref{thm:kappauppersymm} and \ref{thm:kappauppernonsymm}). The latter one, which is the main result in this paper, is valid for all (not necessarily centrally symmetric) convex bodies. It is proved in Section~\ref{sec:nonsymmkappa}.
The other two main results, Theorems~\ref{thm:kappalowerbdassym} and 
\ref{thm:hlower} also hold for non-symmetric bodies. They are proved in the last section. Along the way, we 
obtain some facts (Lemmas~\ref{lem:nsbowarrow} and \ref{lem:cpl}, 
Theorem~\ref{thm:ABVnonsymmetric}) that are useful in studying non-symmetric 
convex bodies in general.

\section{A negative answer to the question of F\"uredi and Loeb}\label{section:Hadwiger}
Let $K$ be an $o$-symmetric convex body in $\Red$.
Denote the norm with unit ball $K$ by~$\norm{\cdot}_K$.
\begin{defn}\label{defn:Hadwiger}
For any convex body $K$, the \emph{Hadwiger number} 
(resp., \emph{strict Hadwiger number}) of $K$ is defined as the maximum number 
$H(K)$ (resp., $H'(K)$) of 
non-overlapping (resp., disjoint) translates of $K$ touching $K$.
\end{defn}
When $K$ is $o$-symmetric, $H(K)$ equals the maximum number of points 
$v_1,\dots,v_m$ such that $\norm{v_i}_K=1$ for all $i$ and 
$\norm{v_i-v_j}_K\geq 1$ for all distinct $i,j$.
Then
$\{K\}\cup\setbuilder{K+v_i}{i=1,\dots,m}$ is a Minkowski arrangement of translates of 
$K$ all intersecting in $o$, hence $\kappa(K)\geq H(K)+1$.
Similarly, $H'(K)$ equals the maximum number of points 
$v_1,\dots,v_m$ such that $\norm{v_i}_K=1$ for all $i$ and 
$\norm{v_i-v_j}_K > 1$ for all distinct $i,j$.
Thus, $\setbuilder{K+v_i}{i=1,\dots,m}$ is a strict Minkowski arrangement of translates of 
$K$ all intersecting in $o$, hence $\kappa'(K)\geq H'(K)$.
To answer the question of F\"uredi and Loeb in the negative, it is therefore sufficient to find an $o$-symmetric convex body $K$ with 
$H'(K)>2^d$.
In dimension $3$, we may take the Euclidean ball $B^3$, for which it is well 
known that $H'(B^3)=12$.
For $d>3$, we may use a result of Talata \cite{Ta05}*{Lemma~3.1} according to which 
$H'(C^k\times K)=2^kH'(K)$ holds for any $o$-symmetric convex body $K$, where 
$C^k$ is the $k$-dimensional cube.
In particular, $H'(B^3\times C^{d-3})=3\cdot 2^{d-1} > 2^d$ for all $d\geq 3$.
In fact, Talata \cite{Ta05}*{Theorem~1.3} constructed $d$-dimensional $o$-symmetric convex 
bodies $K$ such that $H'(K) \geq \frac{16}{35}\sqrt{7}^d$ for all $d\geq 3$.
It follows that for these bodies, $\kappa(K)\geq\kappa'(K)\geq\Omega(\sqrt{7}^d)$.

\section{Packing and non-symmetric norms}\label{section:packing}
\begin{defn}
If $K$ is $o$-symmetric, we define the \emph{packing number} 
$\pack{K}{\lambda}$ of 
$K$ as the maximum number of points in the normed space with unit ball $K$, 
such that 
the ratio of the maximal distance to the minimal distance is at most $\lambda$.
We denote the normed space with unit ball $K$ as 
$\NN$, and also use the notations $\kappa(\NN)$, $\pack{\NN}{\lambda}$, $H(\NN)$, \dots 
in place of $\kappa(K)$, $\pack{K}{\lambda}$, $H(K)$, \dots
\end{defn}

It follows from the isodiametric inequality in normed spaces (an immediate 
corollary to the Brunn--Minkowski Theorem \cites{B47, M63}) that
\begin{equation}\label{eq:packingbound}
 \pack{\NN}{\lambda}\leq \left(\lambda+1\right)^d
\end{equation}
for any $d$-dimensional normed space $\NN$. 
(See Lemma~\ref{lemma:P} below for a generalization.)
Our next result strengthens Theorem~\ref{thm1}.
\begin{thm}\label{thm:kappauppersymm}
Let $\NN$ be a $d$-dimensional real normed space. Then
 \[
  \kappa^\prime(\NN)\leq \kappa(\NN)\leq 
\pack{\NN}{2(1+\tfrac{1}{d})}(d+O(1))\log d= O(3^d d\log d).
 \]
\end{thm}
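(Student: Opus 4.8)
The plan is to bound $\kappa(\NN)$ by a packing number via a rescaling/pigeonhole argument, and then invoke \eqref{eq:packingbound} to get the explicit $O(3^d d\log d)$. First I would take a pairwise intersecting Minkowski arrangement $\{v_i+\lambda_i K\}_{i=1}^n$, reindexed so that $\lambda_1\le\lambda_2\le\dots\le\lambda_n$. The Minkowski condition says $v_i\notin v_j+\lambda_j\inter(K)$, i.e.\ $\norm{v_i-v_j}_K\ge\lambda_j$ whenever $\lambda_j\le\lambda_i$ (taking $j$ to be the one with the smaller ratio). Pairwise intersection of $v_i+\lambda_i K$ and $v_j+\lambda_j K$ gives $\norm{v_i-v_j}_K\le\lambda_i+\lambda_j$. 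So on any sub-family whose homothety ratios lie in a bounded range $[\mu,c\mu]$, the centers $v_i$ form a point set whose pairwise distances lie between $\mu$ (roughly) and $(1+c)\mu$, hence a set of size at most $\pack{\NN}{(1+c)/1}$ after rescaling — this is where the packing number enters. The subtlety is that ratios can range over an unbounded interval, so a single such slab is not enough.

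The key step is a \emph{dyadic bucketing combined with a width/log argument}. I would split the ratios into buckets $[\,2^{k},2^{k+1})$; within one bucket the centers have pairwise $K$-distances in an interval of the form $[\mu, 4\mu]$ roughly (using $\norm{v_i-v_j}_K\ge\lambda_j\ge 2^k$ and $\le\lambda_i+\lambda_j< 2^{k+2}$), so each bucket contributes at most $\pack{\NN}{2\cdot 2}=\pack{\NN}{4}\le 5^d$ points — but that's the wrong constant. To sharpen $5^d$ to $\approx 3^d$ one instead uses the finer quantity $\pack{\NN}{2(1+\tfrac1d)}$: the trick is to allow only a \emph{short} range of ratios, namely a multiplicative window of size $(1+\tfrac1d)$, so that centers in such a thin window have distances in $[\mu,(2+\tfrac1d)\mu]\subseteq[\mu,\,2(1+\tfrac1d)\mu]$ after the right normalization (here we use $\norm{v_i-v_j}_K\ge\lambda_j$ and $\le\lambda_i+\lambda_j\le(1+\tfrac1d)\lambda_j+\lambda_j$), giving at most $\pack{\NN}{2(1+\tfrac1d)}$ per window. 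Then one must control how many such thin windows are needed. Here the point is that only windows within a factor of about $d$ of each other can "interact": if $\lambda_i$ is more than (roughly) $d$ times $\lambda_j$, then actually $v_j+\lambda_j K\subseteq v_i+\lambda_i K$, contradicting the Minkowski condition (the center $v_j+p$, here $o$ after translation, would lie in the interior of the big homothet) — wait, more carefully, it is the *containment of centers* that fails, so one gets that the spread of ratios appearing in one "intersecting component" is bounded, say by a factor $O(1)$, or one argues that across the whole family a constant number of geometric scales suffices. Combining: number of thin windows needed is $O(\log(\text{ratio spread}))=O(\log d)$ after the spread is shown to be $d^{O(1)}$, and the total is $\pack{\NN}{2(1+\tfrac1d)}\cdot O(d\log d)$.

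Finally I would plug $\pack{\NN}{2(1+\tfrac1d)}\le(2(1+\tfrac1d)+1)^d=(3+\tfrac2d)^d=3^d(1+\tfrac{2}{3d})^d\le 3^d e^{2/3}=O(3^d)$ from \eqref{eq:packingbound}, yielding $\kappa(\NN)\le O(3^d)\cdot O(d\log d)=O(3^d d\log d)$, and note $\kappa'(\NN)\le\kappa(\NN)$ trivially since a strict Minkowski arrangement is a Minkowski arrangement. The main obstacle I anticipate is making the "only $O(\log d)$ scales matter" step precise: one needs a clean argument that if the homothety ratios in a pairwise-intersecting Minkowski arrangement span a factor larger than some explicit $f(d)$, then either a containment of centers occurs (contradiction) or the family splits, and to get the stated bound $f(d)$ must be polynomial in $d$ — most naturally linear, coming from the $(1+\tfrac1d)$ in the packing parameter — so that $\log f(d)=O(\log d)$. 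I expect the rest (the per-window packing estimate and the final arithmetic) to be routine given \eqref{eq:packingbound}.
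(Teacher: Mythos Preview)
Your thin-window decomposition and the per-window packing bound are exactly what the paper does: partition the homothety ratios into multiplicative windows of width $(1+\tfrac1d)$, bound each window by $\pack{\NN}{2(1+\tfrac1d)}$, and finish with \eqref{eq:packingbound}. That part is fine.

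The genuine gap is your treatment of the ratio spread. Your claim that the ratios in a pairwise-intersecting Minkowski arrangement can span at most a factor $d^{O(1)}$ is false, and the containment heuristic you sketch does not work. In the symmetric case the Minkowski condition gives $\norm{v_i-v_j}_K\ge\max(\lambda_i,\lambda_j)$ (you only wrote the weaker half), while pairwise intersection gives $\norm{v_i-v_j}_K\le\lambda_i+\lambda_j$; these are compatible for \emph{any} ratio $\lambda_i/\lambda_j$. Concretely, take $K$ the unit ball, $\lambda_1=1$, $v_1=o$, and for any $R>1$ set $\lambda_2=R$, $v_2$ any point with $\norm{v_2}=R$: then both centers avoid the other's interior and the two homothets intersect. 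So the spread is unbounded, and your ``only $O(\log d)$ scales matter'' step cannot be rescued in the form you propose. (There is also an arithmetic slip: covering a multiplicative range $R$ by windows of width $(1+\tfrac1d)$ takes $\sim d\log R$ windows, not $\log R$; your final $O(d\log d)$ is right but the sentence preceding it is not.)

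What the paper does instead is cut off after $N$ windows and treat the tail $I_\infty=\{i:\lambda_i\ge(1+\tfrac1d)^N\}$ by a completely different mechanism. Normalizing so that $\lambda_1=1$ and $v_1=o$, each large homothet must contain a point of $K$ yet avoid $o$ in its interior, which pins $\norm{-v_i}_K$ to the narrow range $[\lambda_i,\lambda_i+1]$. One then replaces each $v_i$ by its radial projection $\un{(-v_i)}\in\bd K$ and uses the bow-and-arrow inequality (Lemma~\ref{lem:nsbowarrow}/Corollary~\ref{cor:bowarrow}) to show these projected points are still at pairwise $\norm{\cdot}_{K}$-distance at least $1-(1+\tfrac1d)^{-N}$. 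Choosing $N\approx d\log d$ makes this at least $1-\tfrac1d$, so $\card{I_\infty}\le\pack{\NN}{2(1+\tfrac1d)}$ as well, and the total is $(N+1)\pack{\NN}{2(1+\tfrac1d)}$. The missing idea in your plan is precisely this tail estimate via normalization; without it the argument does not close.
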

Since $\kappa(C^d)\geq H(C^d)+1 = 3^d$, which shows that 
the upper bound for $\kappa(\NN)$ in
Theorem~\ref{thm:kappauppersymm} is sharp up to the $O(d\log d)$ factor.
Theorem~\ref{thm:kappauppersymm} is a special case of 
Theorem~\ref{thm:kappauppernonsymm} below that also deals with $K$ that are not 
necessarily $o$-symmetric, considered next.
\begin{defn}
If the convex body $K$ contains the origin in the interior, we define the (asymmetric) norm 
$\norm{\cdot}_K\colon\Red\to\Re$ by 
$\norm{x}_K=\inf\setbuilder{\lambda>0}{x\in\lambda K}$.
\end{defn}
Note that the measure of asymmetry of $K$ with respect to $o$ can be defined in terms of the norm:
\[\theta(K,o)=\sup\setbuilder{\norm{x}_K/\norm{-x}_K}{x\in\bd K}.\]
We need the following well-known result.
\begin{lem}[Minkowski \cite{Mi1897}]\label{lem:asym}
For any $d$-dimensional convex body $K$ with centroid $c$, $\theta(K,c)\leq d$.
\end{lem}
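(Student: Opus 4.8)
The plan is to prove the contrapositive-flavoured statement directly: for a convex body $K$ with centroid at $c$, every point $x\in\bd K$ satisfies $\norm{x-c}_{K-c}\geq \tfrac{1}{d}\norm{c-x'}_{K-c}$ for the antipodal boundary point $x'$ on the ray from $x$ through $c$; equivalently $c-K\subseteq d(K-c)$. After translating so that $c=o$, this amounts to showing: if a hyperplane $H$ supports $K$ at a boundary point $u$, and $H'$ is the parallel supporting hyperplane on the opposite side touching $K$ at the far end, then $o$ divides the chord through $o$ in these two directions in ratio at most $d:1$ measured from $H'$. The standard way to see this is to slice $K$ by hyperplanes parallel to $H$: writing $K$ in a coordinate where the relevant direction is the last coordinate $t$, with $K$ lying in the slab $-a\leq t\leq b$ (so $u$ is at height $t=b$, the opposite support at $t=-a$), the centroid condition is $\int_{-a}^{b} t\,A(t)\di t = 0$, where $A(t)=\vol_{d-1}(K\cap\{x_d=t\})$.

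The key step is a comparison argument on the cross-sectional areas. By the Brunn--Minkowski theorem, $A(t)^{1/(d-1)}$ is concave on $[-a,b]$, so $A(t)\geq A_0\bigl(\tfrac{b-t}{b}\bigr)^{d-1}$ on $[0,b]$ for $A_0=A(0)$ (comparing $K$ with the cone over $K\cap\{x_d=0\}$ with apex $u$), and symmetrically $A(t)\geq A_0\bigl(\tfrac{a+t}{a}\bigr)^{d-1}$ on $[-a,0]$; meanwhile $A(t)\leq A_0 \bigl(\tfrac{b-t}{b}\bigr)$-type bounds fail, so instead one bounds the positive and negative parts of $\int tA(t)\di t$ separately. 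Concretely, $\int_0^b tA(t)\di t \leq b\int_0^b A(t)\di t$ gives only a crude bound; the sharp route is: the centroid of the truncated cone (or any concave-slice body) on $[0,b]$ lies at height at most $b/2$... Actually the clean estimate is $\int_0^{b} tA(t)\di t \leq \tfrac{b}{d}\cdot (\text{something})$ — I would instead normalize by replacing $K$ on each side by the extremal body (a cone), for which the moment $\int_0^b t A(t)\di t$ with $A(t)=A_0(1-t/b)^{d-1}$ equals $A_0 b^2/(d(d+1))$, and use that this maximizes the ratio of first moment to a fixed zeroth-moment-and-support data. Balancing $\int_0^b = \int_{-a}^0$ of $tA(t)$ then forces $a\leq db$, which is exactly $\theta(K,o)\leq d$ after identifying $a/b$ with the worst-case norm ratio $\norm{-u}_K/\norm{u}_K$... wait, direction: $b$ is the distance from $o$ to $u\in\bd K$, and we need $\norm{u}_K=1\Rightarrow\norm{-u}_K\geq 1/d$, i.e.\ the far intersection on the $-u$ side is at distance $\geq b/d$, so we want $a\geq b/d$; the extremal ratio $a/b$ can be as large as we like on the ``fat near $u$'' side but is bounded below, and the cone comparison on the $u$-side (where $K$ tapers to the support point $u$) is what does it.

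The main obstacle is getting the cone-comparison inequality set up in the correct direction and identifying precisely which of the two half-chords the Brunn--Minkowski concavity controls: the centroid condition $\int t A(t)\di t=0$ together with $A(t)^{1/(d-1)}$ concave must be parlayed into the one-sided bound, and the cleanest formulation is to note that among all valid profiles $A$ with given $b$, given $A(b)=0$, and given total mass, the one making $o$ as close to the supporting hyperplane at $u$ as possible is the cone $A(t)=A_0(1-t/b)_+^{d-1}$ extended appropriately — a standard Schur-type or direct calculus-of-variations observation. Once the extremal profile is pinned down, the inequality $a\le db$ (equivalently $\theta(K,o)\le d$) follows from an elementary integral computation with equality iff $K$ is a cone, so the bound is tight. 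I would present the cone comparison as the one substantive lemma, cite Brunn--Minkowski for the concavity of $A^{1/(d-1)}$, and dispatch the rest by the moment computation.
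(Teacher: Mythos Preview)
The paper does not prove this lemma: it is quoted as a classical result of Minkowski with a citation to \cite{Mi1897} and is used as a black box. There is therefore no ``paper's proof'' to compare your attempt against.

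On the merits of your sketch: the ingredients you list (slice by level sets of a linear functional, invoke Brunn--Minkowski for the concavity of $A(t)^{1/(d-1)}$, identify the cone as the extremal body) are indeed the standard ones, but what you have written is not yet a proof. Two concrete issues. First, the reduction is mis-stated: the containment $-K\subseteq dK$ is equivalent to $\max_{x\in K}(-f(x))\leq d\max_{x\in K}f(x)$ for every linear functional $f$, i.e.\ a statement about the positions $-a,b$ of the two parallel \emph{supporting hyperplanes} relative to $o$, not about the endpoints of the chord through $o$; your text drifts between the two and at one point reverses which inequality ($a\leq db$ versus $a\geq b/d$) is the target. Second, and more seriously, the crucial comparison step is only asserted (``a standard Schur-type or direct calculus-of-variations observation'') and never carried out; the moment inequalities you write down on $[0,b]$ alone do not combine with the bare centroid condition to force $a\leq db$.

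Here is how to close the gap cleanly with your ingredients. Fix a linear functional $f$ with $\max_K f=b$, $\min_K f=-a$, $\int_K f=0$, choose $p\in K$ with $f(p)=b$, and let $C$ be the cone with apex $p$ over the section $K_0=K\cap\{f=0\}$, truncated to the slab $-a\leq f\leq b$. Convexity of $K$ and $p\in K$ give both containments $K\cap\{f\geq 0\}\supseteq C\cap\{f\geq 0\}$ and $K\cap\{f\leq 0\}\subseteq C\cap\{f\leq 0\}$ (for the second, the segment from $p$ to any $x\in K$ with $f(x)\leq 0$ meets $\{f=0\}$ inside $K_0$). Since $f\geq 0$ on the first region and $f\leq 0$ on the second, these containments yield $0=\int_K f\geq\int_C f$. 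But $C$ is a cone with apex at level $b$ and base at level $-a$, whose centroid sits at level $(b-da)/(d+1)$; hence $b\leq da$, and applying the same argument to $-f$ gives $a\leq db$. No rearrangement or variational lemma is needed --- the single cone comparison already does the work.
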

We will also use the (symmetric) norm defined by the unit ball $K\cap-K$.
Thus, $\norm{x}_{K\cap-K} = \max\{\norm{x}_K,\norm{-x}_K\}$.
We also need another symmetric norm.
\begin{defn}
For any convex body $K$, define its \emph{central symmetral} to be 
$\frac12(K-K)$.
If $o\in\inter(K)$, then $\pack{K}{\lambda}$ is defined to be the maximum 
number of points $p_1,\dots,p_m$ such that 
\[\frac{\max\setbuilder{\norm{p_i-p_j}_{\frac12(K-K)}}{1\leq i<j\leq m}}{\min\setbuilder{\norm{p_i-p_j}_{K\cap -K}}{1\leq i<j\leq m}}\leq\lambda.\]
\end{defn}
If $K$ is $o$-symmetric, then the norms $\norm{\cdot}_{K}$, 
$\norm{\cdot}_{K\cap -K}$, and $\norm{\cdot}_{\frac12(K-K)}$ are all identical, and 
$\pack{K}{\lambda}$ coincides with the definition given before.
\begin{lem}\label{lemma:P}
For any convex body $K$ with $o$ in its interior and any $\lambda>0$,
\[ \pack{K}{\lambda} \leq 
(\lambda+1)^d\frac{\vol(\frac12(K-K))}{\vol(K\cap-K)}.\]
\end{lem}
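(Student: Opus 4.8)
The plan is to run the volume–packing argument underlying the isodiametric bound~\eqref{eq:packingbound}, but with the two $o$-symmetric bodies $M:=K\cap-K$ and $L:=\tfrac12(K-K)$ playing complementary roles: the smaller body $M$ governs the disjointness of the little bodies we pack, while the larger body $L$ controls the region into which they must fit. The preliminary fact I would record first is the inclusion $M\subseteq L$: if $x\in K\cap(-K)$ then $x,-x\in K$, whence $x=\tfrac12\bigl(x-(-x)\bigr)\in\tfrac12(K-K)$. In particular $\norm{\cdot}_L\le\norm{\cdot}_M$ pointwise and $\vol(M)\le\vol(L)$, so the claimed bound is at least $1$ and the case $\pack{K}{\lambda}\le 1$ is trivial. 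Henceforth take distinct points $p_1,\dots,p_m$ realizing $m=\pack{K}{\lambda}\ge 2$, put $r:=\min_{i<j}\norm{p_i-p_j}_M>0$ and $D:=\max_{i<j}\norm{p_i-p_j}_L$, so that $D\le\lambda r$ by the definition of $\pack{K}{\lambda}$.

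I would then surround each $p_i$ by the symmetric body $\tfrac r2 M+p_i$. Since $\norm{\cdot}_M$ is a genuine norm and the centers are pairwise at $\norm{\cdot}_M$-distance at least $r$, a common interior point of two such translates would contradict the triangle inequality; hence the translates have pairwise disjoint interiors, and the union $S:=\bigcup_{i=1}^m\bigl(\tfrac r2 M+p_i\bigr)$ has volume exactly $m\,(r/2)^d\vol(M)$.

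To bound $\vol(S)$ from above I would pass to the difference set. Using $M=-M$ one has
\[S-S\subseteq\bigcup_{i,j}\bigl(rM+(p_i-p_j)\bigr)\subseteq rM+DL\subseteq(r+D)L,\]
the second inclusion because $p_i-p_j\in DL$ and the third because $M\subseteq L$. By the Brunn--Minkowski inequality, $\vol(S)\le\vol\bigl(\tfrac12(S-S)\bigr)\le\bigl(\tfrac{r+D}{2}\bigr)^d\vol(L)$. Comparing this with $\vol(S)=m\,(r/2)^d\vol(M)$ and cancelling $(r/2)^d$ gives
\[m\le\Bigl(1+\tfrac{D}{r}\Bigr)^d\frac{\vol(L)}{\vol(M)}\le(\lambda+1)^d\frac{\vol(L)}{\vol(M)},\]
which is the assertion.

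I do not expect a real obstacle: the argument is short. The only point needing a little care — and the reason $\pack{K}{\lambda}$ is defined using these two particular norms — is the consistency of the two roles, i.e.\ the inclusion $M\subseteq L$, which is exactly what lets the little bodies $\tfrac r2 M+p_i$ be disjoint (governed by $M$) while their pairwise differences still lie in a dilate of $L$. One should also keep in mind that $S$ is not convex but is a finite union of convex bodies with negligible pairwise overlaps, so its volume is additive, and that Brunn--Minkowski is invoked in the form $\vol\bigl(\tfrac12 S+\tfrac12(-S)\bigr)^{1/d}\ge\vol(S)^{1/d}$, which holds for compact sets.
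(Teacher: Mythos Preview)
Your proof is correct and is essentially the same argument as the paper's: you pack translates of $\tfrac12(K\cap-K)$ around the points, bound the difference set of the union inside a dilate of $\tfrac12(K-K)$ using $M\subseteq L$, and finish with Brunn--Minkowski. The only cosmetic difference is that the paper normalizes the minimum $\norm{\cdot}_{K\cap-K}$-distance to $1$ at the outset, whereas you carry the parameter $r$ through and cancel it at the end.
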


We also need to generalize the Hadwiger number to the non-symmetric case, in 
the following non-standard way.
\begin{defn}\label{defn:Hadwiger2}
If $o\in\inter(K)$, define $h(K)$ to be the maximum number of points 
$p_1,\dots,p_m$ on $\bd K$ such that $\norm{p_i-p_j}_K\geq 1$ for all distinct 
$i,j=1,\dots,m$.
Similarly, we define $h'(K)$ to be the maximum number of points 
$p_1,\dots,p_m\in\bd K$ such that $\norm{p_i-p_j}_K > 1$ for all distinct 
$i,j=1,\dots,m$.
\end{defn}

If $K=-K$, then $h(K)=H(K)$ and $h'(K)=H'(K)$ (cf.\ 
Definition~\ref{defn:Hadwiger}).
This is not necessarily the case if $K$ is not $o$-symmetric.
(Note that for all convex bodies, $H(K)=H(\frac12(K-K))$.)
Generalizing our observation for the symmetric case above, if 
$p_1,\dots,p_m\in\bd K$ satisfy $\norm{p_i-p_j}_K > 1$ for all distinct $i,j$, 
then the collection $\setbuilder{K-p_i}{i=1,\dots,m}$ is a pairwise 
intersecting strict Minkowski arrangement of translates of $K$, hence 
$\kappa'(K,o)\geq h'(K)$.
Similarly (by adding $K$ to the collection) we have $\kappa(K,o)\geq h(K)+1$.
We can now formulate our generalization of Theorem~\ref{thm2}.
\begin{thm}\label{thm:kappauppernonsymm}
Let $K$ be a convex body in $\Red$ with $o\in\inter(K)$. Then 
\[\kappa'(K,o)\leq\kappa(K,o)\leq 
\pack{K}{2(1+\tfrac{1}{d})}(d+O(1))(\log d + \log\theta(K,o)).\]
If $c$ is the centroid of $K$ then \[\kappa(K,c)\leq 
\pack{K}{2(1+\tfrac{1}{d})}(2d+O(1))\log d.\]
\end{thm}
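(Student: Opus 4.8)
The plan is to reduce a pairwise-intersecting Minkowski arrangement to a packing-type configuration, and then apply Lemma~\ref{lemma:P} together with a dyadic decomposition according to the homothety ratios. Suppose $\{v_i+\lambda_i K\}_{i=1}^n$ is a pairwise intersecting Minkowski arrangement of $K$ with respect to $o$, i.e.\ $v_i+o\notin v_j+\lambda_j\inter(K)$ for all $i\neq j$. After translating and scaling we may normalize so that the largest ratio is $1$ and the smallest is some $\rho\le 1$. First I would split the index set into $O(\log(1/\rho))$ classes, where class $t$ consists of those $i$ with $\lambda_i\in[2^{-t-1},2^{-t})$; it suffices to bound the number of homothets within a single class and then multiply by the number of classes. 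So fix a class whose ratios all lie in $[\mu,2\mu)$ for some $\mu>0$. For two homothets $v_i+\lambda_iK$ and $v_j+\lambda_jK$ in this class, the Minkowski condition $v_i\notin v_j+\lambda_j\inter K$ gives $\norm{v_i-v_j}_{K}\ge\lambda_j\ge\mu$ (using the norm of the translated body), and symmetrically $\norm{v_j-v_i}_{K}\ge\mu$, hence $\norm{v_i-v_j}_{K\cap-K}\ge\mu$. On the other hand, since $(v_i+\lambda_iK)\cap(v_j+\lambda_jK)\neq\emptyset$, the vector $v_i-v_j$ lies in $\lambda_iK-\lambda_jK\subseteq 4\mu\cdot\frac12(K-K)$ roughly, so $\norm{v_i-v_j}_{\frac12(K-K)}\le c\mu$ for an absolute constant $c$. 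Thus the points $v_i/\mu$ (within one class) form a configuration witnessing $\pack{K}{\lambda}$ for a bounded $\lambda$, giving at most $\pack{K}{O(1)}$ homothets per class.

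The key refinement needed to get the sharp constant $2(1+1/d)$ rather than a crude $O(1)$ is to not split greedily into ratio-classes but to argue more carefully, following the symmetric template of Theorem~\ref{thm:kappauppersymm}. The idea there (which I would adapt) is: order the homothets by decreasing ratio; the smallest homothet $v_n+\lambda_nK$ meets every other, and each $v_i$ ($i<n$) satisfies both $v_i\notin v_n+\lambda_n\inter K$ and $(v_i+\lambda_iK)\cap(v_n+\lambda_nK)\neq\emptyset$ with $\lambda_i\ge\lambda_n$; normalizing $\lambda_n=1$, one checks $1\le\norm{v_i-v_n}_{K\cap-K}$ and $\norm{v_i-v_n}_{\frac12(K-K)}\le\lambda_i+1$. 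Simultaneously one must control the ratios $\lambda_i$ from above: intersecting all give a bound $\lambda_i\le$ something, but not uniformly — instead one uses the pairwise intersection among the large homothets too. The standard trick is that if $\lambda_i,\lambda_j\ge 1$ and both meet the unit-ratio one at $o$-ish scale, then $\norm{v_i-v_j}\le\lambda_i+\lambda_j$ while $\norm{v_i-v_j}\ge\max(\lambda_i,\lambda_j)$ from the Minkowski condition, forcing $\min(\lambda_i,\lambda_j)$-type control; iterating, the ratios can be taken within a factor $2(1+1/d)$ after throwing away at most $(d+O(1))\log d$ many via a volumetric/counting argument. Combining, the count is at most $\pack{K}{2(1+1/d)}$ times the number of ratio-scales that survive, which is $(d+O(1))(\log d+\log\theta(K,o))$ — the $\theta$ appearing because in the non-symmetric case the two inequalities $\norm{v_i-v_j}_K\ge\lambda_j$ and $\norm{v_j-v_i}_K\ge\lambda_i$ only combine into a symmetric bound after paying a factor $\theta(K,o)$, which contributes $\log\theta$ to the range of scales one must sweep.

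For the centroid statement, I would simply invoke Lemma~\ref{lem:asym}, which gives $\theta(K,c)\le d$, so $\log\theta(K,c)\le\log d$ and the bound $(d+O(1))(\log d+\log\theta)$ becomes $(2d+O(1))\log d$ after re-centering $K$ so that $c$ is at the origin (note $\pack{K}{\lambda}$ and the whole argument are translation-invariant once we fix the reference point, so replacing $o$ by $c$ only changes the asymmetry constant). Finally, to recover the clean $O(3^d d\log d)$ form of Theorem~\ref{thm2}, apply Lemma~\ref{lemma:P}: $\pack{K}{2(1+1/d)}\le(3+O(1/d))^d\,\vol(\tfrac12(K-K))/\vol(K\cap-K) = O(3^d)\,\vol(K-K)/\vol((K-p)\cap(p-K))$ (absorbing the $2^d$ from $\frac12(K-K)$ into the volume ratio as written in Theorem~\ref{thm2}), and for the centroid use the John-type fact $\vol(K-K)/\vol(K\cap-K)\le\binom{2d}{d}$. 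The main obstacle I anticipate is the bookkeeping in the second paragraph: making the "sweep over scales" rigorous in the asymmetric setting, i.e.\ verifying that the pairwise-intersection conditions among the large homothets, combined with the Minkowski conditions, genuinely restrict the ratios to a window of multiplicative width depending only on $d$ and $\theta$, with only $(d+O(1))(\log d+\log\theta)$ exceptional scales — this is where the non-symmetry forces the extra $\log\theta$ term and where one must be careful that the asymmetric norm inequalities are applied in the correct direction.
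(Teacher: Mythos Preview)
Your proposal has a genuine gap: the crude dyadic decomposition in your first paragraph does not give a finite bound, because there is no a priori control on the ratio $\rho$ between the smallest and largest homothety coefficients, so the number of ratio-classes $O(\log(1/\rho))$ is unbounded. Your second paragraph acknowledges that a refinement is needed but does not supply the missing idea. In particular, the assertion that ``the ratios can be taken within a factor $2(1+1/d)$ after throwing away at most $(d+O(1))\log d$ many'' is not what happens, and I do not see how to make such an argument work: nothing in the hypotheses forces the ratios to cluster.

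The paper's actual mechanism is different. One normalizes so that the \emph{smallest} ratio is $1$ (with $v_1=o$), splits into $N$ geometric classes $I_j$ with ratios in $[(1+\delta)^{j-1},(1+\delta)^j)$, and puts \emph{all} remaining homothets (those with $\lambda_i\ge(1+\delta)^N$) into a single class $I_\infty$. The classes $I_j$ are bounded exactly as in your first paragraph (and no $\theta$ is needed there: from $v_i\notin v_j+\lambda_j\inter K$ and $v_j\notin v_i+\lambda_i\inter K$ one gets $\norm{v_i-v_j}_{K\cap-K}\ge 1$ directly). The crucial step you are missing is the treatment of $I_\infty$: since every large homothet meets $K=v_1+\lambda_1K$ and avoids $o$ in its interior, the \emph{normalizations} $\un{(-v_i)}=-v_i/\norm{-v_i}_K\in\bd K$ are pairwise at $\norm{\cdot}_{K\cap-K}$-distance at least $1-\epsi$, by the asymmetric bow-and-arrow inequality (Lemma~\ref{lem:nsbowarrow}), where $\epsi=\theta(K,o)(1+\delta)^{-N}$. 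This is where $\theta$ genuinely enters: it controls how small $\epsi$ can be made for given $N$, hence how large $N$ must be (namely $N\approx (\log d+\log\theta)/\log(1+\delta)$ with $\delta=1/d$) so that $I_\infty$ also satisfies a $\pack{K}{2(1+1/d)}$ bound. Your explanation that $\theta$ arises from symmetrizing the per-class Minkowski inequalities is incorrect. Once you have this normalization trick for $I_\infty$, the rest of your outline (choice of $\delta=1/d$, invocation of Lemma~\ref{lem:asym} for the centroid case) is fine.
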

The proof is postponed to the next section.
The following is an example of a $d$-dimensional convex body $K$ with centroid $c$ for which 
$\kappa(K,c)$ is much larger than in the symmetric case.
Note that $\kappa(\Delta,o)=10$, where $\Delta$ is a triangle with centroid $o$ 
\cite{FT95} (see Fig.~\ref{triangles}).
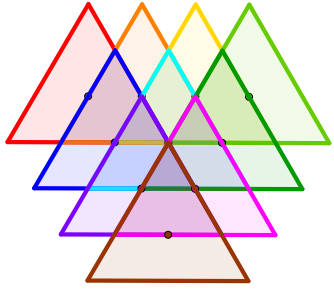
\begin{figure}
\centering
\definecolor{ffqqqq}{rgb}{1.,0.,0.}
\definecolor{zzttqq}{rgb}{0.6,0.2,0.}
\definecolor{ffqqff}{rgb}{1.,0.,1.}
\definecolor{xfqqff}{rgb}{0.4980392156862745,0.,1.}
\definecolor{qqzzqq}{rgb}{0.,0.6,0.}
\definecolor{qqffff}{rgb}{0.,1.,1.}
\definecolor{qqqqff}{rgb}{0.,0.,1.}
\definecolor{wwccqq}{rgb}{0.4,0.8,0.}
\definecolor{ffdxqq}{rgb}{1.,0.8431372549019608,0.}
\definecolor{ffxfqq}{rgb}{1.,0.4980392156862745,0.}
\definecolor{uuuuuu}{rgb}{0.26666666666666666,0.26666666666666666,
0.26666666666666666}
\begin{tikzpicture}[line cap=round,line join=round,>=triangle 45,scale=0.87]
  \fill[line width=1.6pt,color=ffqqqq,fill=ffqqqq,fill opacity=0.1] 
(2.963416172889948,4.077454144465436) -- (1.740007440834372,1.9750915660786712) 
-- (4.1724112077108515,1.9667698141002143) -- cycle;
  \draw [line width=1.6pt,color=ffqqqq] (2.963416172889948,4.077454144465436)-- 
(1.740007440834372,1.9750915660786712);
  \draw [line width=1.6pt,color=ffqqqq] 
(1.740007440834372,1.9750915660786712)-- 
(4.1724112077108515,1.9667698141002143);
  \draw [line width=1.6pt,color=ffqqqq] 
(4.1724112077108515,1.9667698141002143)-- (2.963416172889948,4.077454144465436);
  \draw [fill=uuuuuu] (2.9586116071450586,2.67310517488144) circle (1.5pt);
  \fill[line width=1.6pt,color=ffxfqq,fill=ffxfqq,fill opacity=0.1] 
(3.7742174285154424,4.074680227139284) -- 
(2.5508086964598653,1.972317648752519) -- 
(4.983212463336347,1.9639958967740623) -- cycle;
  \draw [line width=1.6pt,color=ffxfqq] 
(3.7742174285154424,4.074680227139284)-- (2.5508086964598653,1.972317648752519);
  \draw [line width=1.6pt,color=ffxfqq] 
(2.5508086964598653,1.972317648752519)-- (4.983212463336347,1.9639958967740623);
  \draw [line width=1.6pt,color=ffxfqq] 
(4.983212463336347,1.9639958967740623)-- (3.7742174285154424,4.074680227139284);
  \draw [fill=ffxfqq] (3.769412862770551,2.6703312575552878) circle (1.5pt);
  \fill[line width=1.6pt,color=ffdxqq,fill=ffdxqq,fill opacity=0.1] 
(4.585018684140937,4.071906309813132) -- (3.361609952085359,1.9695437314263666) 
-- (5.794013718961841,1.96122197944791) -- cycle;
  \draw [line width=1.6pt,color=ffdxqq] (4.585018684140937,4.071906309813132)-- 
(3.361609952085359,1.9695437314263666);
  \draw [line width=1.6pt,color=ffdxqq] 
(3.361609952085359,1.9695437314263666)-- (5.794013718961841,1.96122197944791);
  \draw [line width=1.6pt,color=ffdxqq] (5.794013718961841,1.96122197944791)-- 
(4.585018684140937,4.071906309813132);
  \draw [fill=ffdxqq] (4.580214118396046,2.667557340229136) circle (1.5pt);
  \fill[line width=1.6pt,color=wwccqq,fill=wwccqq,fill opacity=0.1] 
(5.395819939766431,4.0691323924869796) -- 
(4.1724112077108515,1.9667698141002143) -- 
(6.6048149745873355,1.958448062121758) -- cycle;
  \draw [line width=1.6pt,color=wwccqq] 
(5.395819939766431,4.0691323924869796)-- 
(4.1724112077108515,1.9667698141002143);
  \draw [line width=1.6pt,color=wwccqq] 
(4.1724112077108515,1.9667698141002143)-- 
(6.6048149745873355,1.958448062121758);
  \draw [line width=1.6pt,color=wwccqq] 
(6.6048149745873355,1.958448062121758)-- (5.395819939766431,4.0691323924869796);
  \draw [fill=wwccqq] (5.39101537402154,2.664783422902983) circle (1.5pt);
  \fill[line width=1.6pt,color=qqqqff,fill=qqqqff,fill opacity=0.1] 
(3.3664145178302496,3.373892701010362) -- 
(2.143005785774673,1.2715301226235973) -- 
(4.575409552651153,1.2632083706451405) -- cycle;
  \draw [line width=1.6pt,color=qqqqff] 
(3.3664145178302496,3.373892701010362)-- (2.143005785774673,1.2715301226235973);
  \draw [line width=1.6pt,color=qqqqff] 
(2.143005785774673,1.2715301226235973)-- (4.575409552651153,1.2632083706451405);
  \draw [line width=1.6pt,color=qqqqff] 
(4.575409552651153,1.2632083706451405)-- (3.3664145178302496,3.373892701010362);
  \draw [fill=qqqqff] (3.361609952085359,1.9695437314263666) circle (1.5pt);
  \fill[line width=1.6pt,color=qqffff,fill=qqffff,fill opacity=0.1] 
(4.1772157734557425,3.37111878368421) -- (2.953807041400167,1.2687562052974455) 
-- (5.386210808276647,1.2604344533189884) -- cycle;
  \draw [line width=1.6pt,color=qqffff] (4.1772157734557425,3.37111878368421)-- 
(2.953807041400167,1.2687562052974455);
  \draw [line width=1.6pt,color=qqffff] 
(2.953807041400167,1.2687562052974455)-- (5.386210808276647,1.2604344533189884);
  \draw [line width=1.6pt,color=qqffff] 
(5.386210808276647,1.2604344533189884)-- (4.1772157734557425,3.37111878368421);
  \draw [fill=qqffff] (4.1724112077108515,1.9667698141002143) circle (1.5pt);
  \fill[line width=1.6pt,color=qqzzqq,fill=qqzzqq,fill opacity=0.1] 
(4.988017029081239,3.368344866358058) -- 
(3.7646082970256596,1.2659822879712928) -- 
(6.197012063902143,1.2576605359928363) -- cycle;
  \draw [line width=1.6pt,color=qqzzqq] (4.988017029081239,3.368344866358058)-- 
(3.7646082970256596,1.2659822879712928);
  \draw [line width=1.6pt,color=qqzzqq] 
(3.7646082970256596,1.2659822879712928)-- 
(6.197012063902143,1.2576605359928363);
  \draw [line width=1.6pt,color=qqzzqq] 
(6.197012063902143,1.2576605359928363)-- (4.988017029081239,3.368344866358058);
  \draw [fill=qqzzqq] (4.983212463336347,1.9639958967740623) circle (1.5pt);
  \fill[line width=1.6pt,color=xfqqff,fill=xfqqff,fill opacity=0.1] 
(3.769412862770551,2.6703312575552878) -- 
(2.5460041307149743,0.5679686791685236) -- 
(4.978407897591454,0.5596469271900668) -- cycle;
  \draw [line width=1.6pt,color=xfqqff] 
(3.769412862770551,2.6703312575552878)-- 
(2.5460041307149743,0.5679686791685236);
  \draw [line width=1.6pt,color=xfqqff] 
(2.5460041307149743,0.5679686791685236)-- 
(4.978407897591454,0.5596469271900668);
  \draw [line width=1.6pt,color=xfqqff] 
(4.978407897591454,0.5596469271900668)-- (3.769412862770551,2.6703312575552878);
  \draw [fill=xfqqff] (3.7646082970256596,1.2659822879712928) circle (1.5pt);
  \fill[line width=1.6pt,color=ffqqff,fill=ffqqff,fill opacity=0.1] 
(4.580214118396046,2.667557340229136) -- (3.356805386340468,0.5651947618423713) 
-- (5.789209153216949,0.5568730098639146) -- cycle;
  \draw [line width=1.6pt,color=ffqqff] (4.580214118396046,2.667557340229136)-- 
(3.356805386340468,0.5651947618423713);
  \draw [line width=1.6pt,color=ffqqff] 
(3.356805386340468,0.5651947618423713)-- (5.789209153216949,0.5568730098639146);
  \draw [line width=1.6pt,color=ffqqff] 
(5.789209153216949,0.5568730098639146)-- (4.580214118396046,2.667557340229136);
  \draw [fill=ffqqff] (4.575409552651153,1.2632083706451405) circle (1.5pt);
  \fill[line width=1.6pt,color=zzttqq,fill=zzttqq,fill opacity=0.1] 
(4.1724112077108515,1.9667698141002143) -- 
(2.9490024756552757,-0.13559276428655045) -- 
(5.381406242531756,-0.14391451626500693) -- cycle;
  \draw [line width=1.6pt,color=zzttqq] 
(4.1724112077108515,1.9667698141002143)-- 
(2.9490024756552757,-0.13559276428655045);
  \draw [line width=1.6pt,color=zzttqq] 
(2.9490024756552757,-0.13559276428655045)-- 
(5.381406242531756,-0.14391451626500693);
  \draw [line width=1.6pt,color=zzttqq] 
(5.381406242531756,-0.14391451626500693)-- 
(4.1724112077108515,1.9667698141002143);
  \draw [fill=zzttqq] (4.167606641965961,0.5624208445162191) circle (1.5pt);
\end{tikzpicture}
\caption{A pairwise intersecting Minkowski arrangement of $10$ triangles 
\cite{FT95}}\label{triangles}
\end{figure}
A Cartesian product of $\lfloor d/2\rfloor$ triangles gives a $d$-dimensional convex body $K$ 
with centroid $c$ such that $\kappa(K,c)\geq 10^{\lfloor d/2\rfloor}\geq\sqrt{10}^{d-1}$.

\section{\texorpdfstring{Bounding $\kappa$ from above}{Bounding K from 
above}}\label{sec:nonsymmkappa}

\begin{proof}[Proof of Lemma~\ref{lemma:P}]
Let $T\subset\Red$ be such that $\norm{x-y}_{K\cap-K}\geq 1$ and $\norm{x-y}_{\frac12(K-K)}\leq\lambda$
for all distinct $x,y\in T$.
Then $\setbuilder{v+\frac12(K\cap-K)}{v\in T}$ is a packing.
Let $P=T+\frac12(K\cap-K)$.
Then $\vol(P)=2^{-d}\card{T}\vol(K\cap-K)$ and 
\[P-P=T-T+(K\cap-K)\subseteq\frac{\lambda}{2}(K-K)+\frac12(K-K)=\frac{\lambda+1}{2
}(K-K).\]
By the Brunn--Minkowski inequality, $\vol(P-P)\geq 2^d\vol(P)$, and it follows 
that 
\[\card{T}=\frac{2^d\vol(P)}{\vol(K\cap-K)}\leq\frac{\vol(P-P)}{\vol(K\cap-K)}
\leq\frac{(\lambda+1)^d\vol(\frac12(K-K))}{\vol(K\cap-K)}.\qedhere\]
\end{proof}

Before we prove Theorem~\ref{thm:kappauppernonsymm}, we first show an extension 
of the so-called ``bow-and-arrow'' inequality of \cite{FL94} 
(Corollary~\ref{cor:bowarrow} below) to the case of an asymmetric norm.
\begin{defn}
For any non-zero $v\in\Red$ write $\un{v}=\frac{1}{\norm{v}_K}v$ for the 
normalization of $v$ with respect to $\norm{\cdot}_K$.
\end{defn}
We will only consider normalizations with respect to $\norm{\cdot}_K$, 
never with respect to $\norm{\cdot}_{K\cap-K}$ or $\norm{\cdot}_{\frac12(K-K)}$.

\begin{lem}\label{lem:nsbowarrow}
Let $K$ be a convex body in $\Red$ containing $o$ in its interior.
Let $a,b\in\Red$ such that $\norm{a}_K\geq\norm{b}_K>0$.
Then
\[ 
\norm{\un{a}-\un{b}}_K\geq\frac{\norm{a-b}_K-\norm{a}_K+\norm{b}_K}{\norm{b}_K}
.\]
\end{lem}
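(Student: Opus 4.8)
The plan is to reduce the asymmetric-norm ``bow-and-arrow'' inequality to a purely one-dimensional convexity estimate along the segment joining $\un a$ and $\un b$, exactly as in the symmetric case treated in \cite{FL94}, but being careful not to use symmetry of $\norm{\cdot}_K$ anywhere. Write $\alpha=\norm a_K$ and $\beta=\norm b_K$, so $\alpha\geq\beta>0$, and set $\un a=a/\alpha$, $\un b=b/\beta$. The quantity we wish to bound from below is $\norm{\un a-\un b}_K$, and the right-hand side is $\bigl(\norm{a-b}_K-\alpha+\beta\bigr)/\beta$. First I would rewrite $a-b$ in terms of the normalized vectors: $a-b=\alpha\un a-\beta\un b=\beta(\un a-\un b)+(\alpha-\beta)\un a$.

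Next I would apply the triangle inequality for $\norm{\cdot}_K$ (which holds for an asymmetric norm, since $K$ is convex and contains $o$ in its interior) to this decomposition:
\[
\norm{a-b}_K=\norm{\beta(\un a-\un b)+(\alpha-\beta)\un a}_K\leq\beta\norm{\un a-\un b}_K+(\alpha-\beta)\norm{\un a}_K.
\]
Since $\norm{\un a}_K=1$ by construction, this reads $\norm{a-b}_K\leq\beta\norm{\un a-\un b}_K+\alpha-\beta$. Rearranging gives exactly
\[
\norm{\un a-\un b}_K\geq\frac{\norm{a-b}_K-\alpha+\beta}{\beta}=\frac{\norm{a-b}_K-\norm a_K+\norm b_K}{\norm b_K},
\]
which is the claimed inequality. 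The positive-homogeneity $\norm{\lambda x}_K=\lambda\norm x_K$ for $\lambda>0$ and the subadditivity $\norm{x+y}_K\leq\norm x_K+\norm y_K$ are the only properties used, and both are valid for the gauge of a convex body containing the origin in its interior, so the argument goes through verbatim in the asymmetric setting.

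I do not expect a genuine obstacle here: the content is entirely the algebraic decomposition $a-b=\beta(\un a-\un b)+(\alpha-\beta)\un a$ together with the triangle inequality, and the hypothesis $\alpha\geq\beta$ is used precisely to ensure that the coefficient $\alpha-\beta$ is nonnegative so that positive homogeneity (rather than the absolute-value homogeneity unavailable for asymmetric norms) suffices. The only point that warrants a sentence of care is confirming that $\norm{\cdot}_K$ is subadditive when $K$ is merely convex with $o\in\inter K$ and not $o$-symmetric; this is standard, since the gauge of a convex body with the origin in its interior is a sublinear functional. If one wanted to avoid even invoking subadditivity abstractly, the inequality $\norm{u+v}_K\leq\norm u_K+\norm v_K$ follows directly from $u/\norm u_K,\,v/\norm v_K\in K$ and convexity of $K$, giving $(u+v)/(\norm u_K+\norm v_K)\in K$.
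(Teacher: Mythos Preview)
Your proof is correct and follows essentially the same approach as the paper: both decompose $a-b=\norm{b}_K(\un{a}-\un{b})+(\norm{a}_K-\norm{b}_K)\un{a}$ and apply subadditivity and positive homogeneity of the gauge, using $\norm{a}_K\geq\norm{b}_K$ to ensure the coefficient $\norm{a}_K-\norm{b}_K$ is nonnegative. Your additional remarks justifying subadditivity for asymmetric gauges are helpful commentary but not a different argument.
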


\begin{proof}
\begin{align*}
\norm{a-b}_K &=\norm{\norm{a}_K\un{a}-\norm{b}_K\un{b}}_K\\
&=\norm{\norm{b}_K(\un{a}-\un{b})+(\norm{a}_K-\norm{b}_K)\un{a}}_K\\
&\leq \norm{b}_K\norm{\un{a}-\un{b}}_K+\norm{a}_K-\norm{b}_K. \qedhere
\end{align*}
\end{proof}

\begin{cor}\label{cor:bowarrow}
For any two non-zero elements $a$ and $b$ of a normed space,
\[ 
\norm{\un{a}-\un{b}}\geq\frac{\norm{a-b}-\abs{\norm{a}-\norm{b}}}{\norm{b}}.\]
\end{cor}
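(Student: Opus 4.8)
The plan is to derive Corollary~\ref{cor:bowarrow} directly from Lemma~\ref{lem:nsbowarrow}, since a genuine normed space is precisely the special case of that lemma in which the norm is symmetric. In this setting the three norms $\norm{\cdot}_K$, $\norm{\cdot}_{K\cap-K}$ and $\norm{\cdot}_{\frac12(K-K)}$ all coincide with $\norm{\cdot}$, so $\un{v}=v/\norm{v}$ for every nonzero $v$, the norm is even, and $\norm{a}_K-\norm{b}_K=\abs{\norm{a}-\norm{b}}$ whenever $\norm{a}\geq\norm{b}$. The only wrinkle is that Lemma~\ref{lem:nsbowarrow} always places the smaller of $\norm{a},\norm{b}$ in the denominator, whereas the corollary insists on $\norm{b}$; so I would split into two cases according to the order of $\norm{a}$ and $\norm{b}$.

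If $\norm{a}\geq\norm{b}>0$, then $\norm{b}$ is already the smaller norm, and Lemma~\ref{lem:nsbowarrow} applies verbatim:
\[\norm{\un{a}-\un{b}}=\norm{\un{a}-\un{b}}_K\geq\frac{\norm{a-b}_K-\norm{a}_K+\norm{b}_K}{\norm{b}_K}=\frac{\norm{a-b}-\abs{\norm{a}-\norm{b}}}{\norm{b}},\]
which is exactly the claimed inequality. If instead $\norm{b}>\norm{a}>0$, I would apply Lemma~\ref{lem:nsbowarrow} with the roles of $a$ and $b$ interchanged, obtaining, using that the norm is even,
\[\norm{\un{a}-\un{b}}=\norm{\un{b}-\un{a}}_K\geq\frac{\norm{b-a}_K-\norm{b}_K+\norm{a}_K}{\norm{a}_K}=\frac{\norm{a-b}-\abs{\norm{a}-\norm{b}}}{\norm{a}}.\]
Finally I would weaken the denominator from $\norm{a}$ to the larger quantity $\norm{b}$: by the triangle inequality $\norm{a-b}\geq\norm{b}-\norm{a}=\abs{\norm{a}-\norm{b}}$, so the numerator is nonnegative and dividing by $\norm{b}$ rather than $\norm{a}$ can only decrease the fraction.

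These two cases are exhaustive, so the corollary follows. There is essentially no obstacle beyond this bookkeeping; the single point worth stating explicitly is that the nonnegativity of $\norm{a-b}-\abs{\norm{a}-\norm{b}}$ — itself just the triangle inequality — is exactly what licenses replacing $\norm{a}$ by $\norm{b}$ in the denominator in the second case. No estimate beyond Lemma~\ref{lem:nsbowarrow} is needed.
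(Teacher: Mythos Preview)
Your proof is correct and follows exactly the intended route: the paper states this result as a corollary of Lemma~\ref{lem:nsbowarrow} without giving a separate proof, and your derivation---apply the lemma, swap $a$ and $b$ when $\norm{b}>\norm{a}$, then use nonnegativity of the numerator to enlarge the denominator---is precisely the implicit argument.
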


\begin{proof}[Proof of Theorem~\ref{thm:kappauppernonsymm}]
Consider a pairwise intersecting Minkowski arrangement $\setbuilder{\lambda_i K+v_i}{i=1,\dots,m}$.
Without loss of generality, $\lambda_1=\min_i\lambda_i=1$ and $v_1=o$.
Given $N\in\Ne$ and $\delta>0$, we partition the Minkowski arrangement into $N$ 
subarrangements as follows.
Let $I_j=\setbuilder{i}{\lambda_i\in[(1+\delta)^{j-1},(1+\delta)^{j})}$ for each 
$j=1,\dots,N$, and let $I_\infty=\setbuilder{i}{\lambda_i\in[(1+\delta)^N,\infty)}$.
We bound the size of each subarrangement $\setbuilder{\lambda_i K+v_i}{i\in I_j}$, 
$j\in\{1,\dots,N,\infty\}$, separately.
Finally, we choose appropriate values for $N$ and $\delta$.

The next lemma bounds $I_j$, $j\neq\infty$, in terms of $\delta$ and $K$.
\begin{lem}\label{lem:I}
Let $K$ be a $d$-dimensional convex body with $o\in\inter(K)$.
Let $\setbuilder{v_i+\lambda_i K}{i\in I}$ be a pairwise intersecting Minkowski 
arrangement of positive homothets of $K$, with $\lambda_i\in[1,1+\delta)$ for 
each $i\in I$.
Then \[\card{I}\leq \pack{K}{2(1+\delta)}.\]
\end{lem}

\begin{proof}
For any distinct $i,j\in I$, $(v_i+\lambda_i K)\cap (v_j+\lambda_j K)\neq \emptyset$, so there exist $x,y\in K$ such that $v_i-v_j=\lambda_j y-\lambda_i x$.
Since $\lambda_i,\lambda_j\in[1, 1+\delta]$, $o\in K$ and $K$ is convex, $\lambda_i x,\lambda_j y\in (1+\delta)K$.
Hence, $v_i-v_j\in(1+\delta)(K-K)$ and $\norm{v_i-v_j}_{\frac12(K-K)}\leq 
2(1+\delta)$.
Since $v_i\notin v_j+\lambda_j\inter(K)$, it follows that 
$v_i-v_j\notin\inter(K\cap-K)$ for all distinct $i,j\in I$, which gives 
$\norm{v_i-v_j}_{K\cap-K}\geq 1$.
\end{proof}

The following lemma is used to bound $I_\infty$.
\begin{lem}\label{lem:Iinfty}
Let $K$ be a $d$-dimensional convex body with $o\in\inter(K)$.
Let $\setbuilder{v_i+\lambda_i K}{i\in I}$ be a Minkowski arrangement of 
positive homothets of $K$ with $\lambda_i\geq 1$, $(v_i+\lambda_i K)\cap-\epsi 
K\neq\emptyset$ and $o\notin v_i+\lambda_i\inter(K)$ for all $i\in I$.
Then
\[ \card{I}\leq \pack{K}{\frac{2}{1-\epsi}}.\]
\end{lem}

We first consider any two homothets in the Minkowski arrangement of the previous 
lemma.
\begin{lem}\label{lem:Iinftylocal}
Let $v_1+\lambda_1 K$ and $v_2+\lambda_2 K$ be two positive homothets of $K$ 
such that $\lambda_1,\lambda_2\geq 1$, $v_1\notin v_2+\lambda_2\inter(K)$, 
$v_2\notin v_1+\lambda_1\inter(K)$, $o\notin v_i+\lambda_i\inter(K)$ and 
$(v_i+\lambda_i K)\cap-\epsi K\neq\emptyset$ \textup{(}$i=1,2$\textup{)}.
Then 
$\norm{\frac{1}{\norm{-v_1}_K}(-v_1)-\frac{1}{\norm{-v_2}_K}(-v_2)}_{K\cap-K}
\geq1-\epsi$.
\end{lem}

\begin{proof}
Since $\norm{\cdot}_{K\cap-K}$ is symmetric, we may assume that 
$\norm{-v_1}_K\leq\norm{-v_2}_K$.
Since $(v_1+\lambda_1 K)\cap-\epsi K\neq\emptyset$, $v_1+\lambda_1 x=-\epsi y$ 
for some $x,y\in K$.
Therefore, 
$\norm{-v_1}_K\leq\lambda_1\norm{x}_K+\epsi\norm{y}_K\leq\lambda_1+\epsi$.
Also, since $o\notin v_1+\lambda_1\inter(K)$, we have that 
$\norm{-v_1}_K\geq\lambda_1$.
Similarly, $\lambda_2\leq \norm{-v_2}_K\leq\lambda_2+\epsi$, and it follows from $v_1\notin v_2+\lambda_2\inter(K)$ that 
$\norm{v_1-v_2}_K\geq\lambda_2$.
We apply Lemma~\ref{lem:nsbowarrow} to obtain
\begin{align*}
\norm{\un{(-v_1)}-\un{(-v_2)}}_{K\cap-K}&\geq \norm{\un{(-v_2)}-\un{(-v_1)}}_K\\
&\geq \frac{\norm{v_1-v_2}_K-\norm{-v_2}_K+\norm{-v_1}_K}{\norm{-v_1}_K}\\
&\geq \frac{\lambda_2-(\lambda_2+\epsi)+\norm{-v_1}_K}{\norm{-v_1}_K}\\
&= 1 - \frac{\epsi}{\norm{-v_1}_K} \geq  1 - \frac{\epsi}{\lambda_1} \geq 
1-\epsi. \qedhere
\end{align*}
\end{proof}

\begin{proof}[Proof of Lemma~\ref{lem:Iinfty}]
For each $i\in I$, let $t_i=\un{(-v_i)}$.
Let $T:=\setbuilder{t_i}{i\in I}$.
By Lemma~\ref{lem:Iinftylocal}, $\norm{t_i-t_j}_{K\cap-K}\geq1-\epsi$ for all 
distinct $i,j\in I$.
Since $T\subset \bd K\subset K$, $\norm{t_i-t_j}_{\frac12(K-K)}\leq 2$.
It follows that $\card{I}\leq \pack{K}{2/(1-\epsi)}$.
\end{proof}

\medskip\noindent
We now finish the proof of Theorem~\ref{thm:kappauppernonsymm}.
By Lemma~\ref{lem:I}, $\card{I_j}\leq \pack{K}{2(1+\delta)}$ for $j=1,\dots,N$,
and by Lemma~\ref{lem:Iinfty} applied to $I_\infty$ and 
$\epsi=\theta(K,o)(1+\delta)^{-N}$, 
\[\card{I_\infty}\leq \pack{K}{\frac{2}{1-\theta(K,o)(1+\delta)^{-N}}}.\]
It follows that
\[ m = \sum_{j=1}^N\card{I_j} + \card{I_\infty} \leq 
N\pack{K}{2(1+\delta)}+\pack{K}{\frac{2}{1-\theta(K,o)(1+\delta)^{-N}}}.\]
We now choose 
\[N:=1+\left\lceil\frac{\log d + \log\theta(K,o)}{\log(1+\frac{1}{d})}\right\rceil = 
(d+O(1))O(\log d + \log\theta(K,o))\]
and $\delta=1/d$.
Then \[N\geq 1+\frac{\log d + \log\theta(K,o)}{\log(1+\delta)},\] which implies 
that \[\frac{2}{1-\theta(K,o)(1+\delta)^{-N}}\leq 2(1+\delta),\] hence
\renewcommand{\qedsymbol}{$\blacksquare$}
\[ m\leq 
\pack{K}{2(1+\tfrac{1}{d})}(N+1)=\pack{K}{2(1+\tfrac{1}{d})}(d+O(1))(\log d + 
\log\theta(K,o)).\]
The second inequality follows from the first and Lemma~\ref{lem:asym}.
\end{proof}
Note that Theorem~\ref{thm:kappauppersymm} immediately follows from Theorem~\ref{thm:kappauppernonsymm}, and Theorem~\ref{thm1} from Theorem~\ref{thm:kappauppersymm}. 
\begin{proof}[Proof of Theorem~\ref{thm2}]
The first statement follows from Theorem~\ref{thm:kappauppernonsymm} and  Lemma~\ref{lemma:P}.
Also, by a result of Milman and Pajor \cite{MiPa00}*{Corollary~3} for a convex 
body $K$ with centroid $o$, $\vol(K)/\vol(K\cap-K)\leq 2^d$, which, together 
with the Rogers--Shephard inequality \cite{RoS57}
$\vol(K-K)/\vol(K)\leq\binom{2d}{d}$, gives the second statement.
\end{proof}

We derive the following application of Theorem~\ref{thm1}.
\begin{cor}\label{cor:chainofballs}
Let $K$ be an $o$-symmetric convex body, and $p_1,p_2,\ldots,p_n$ be points in 
$\Red$. Let $r_1,r_2,\ldots,r_n >0$, and assume that for any $1\leq i<j\leq n$, 
we have that $p_j\in p_i+r_i\bd K$. Then $n\leq O(6^d d^2\log d)$.
\end{cor}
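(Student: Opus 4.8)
The plan is to reduce everything to Theorem~\ref{thm1} and the packing bound~\eqref{eq:packingbound}, by bucketing the radii. Write $\norm{\cdot}$ for $\norm{\cdot}_K$. Since $p_j \in p_i + r_i\bd K$ for $i<j$, the balls $B_i := p_i + r_i K$ are pairwise intersecting (as $p_j\in B_i\cap B_j$), and $\norm{p_i-p_j} = r_{\min(i,j)}$ for all $i\ne j$; in particular $r_n$ plays no role, so we work with $r_1,\dots,r_{n-1}$. The first step is a crude radius estimate: for $1\le i<j\le n-1$ we have $r_j\le 2r_i$, which follows from the triangle inequality applied to $p_{j+1}$, since $r_j = \norm{p_{j+1}-p_j} \le \norm{p_{j+1}-p_i} + \norm{p_i-p_j} = 2r_i$.

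Next I would bucket: for $t\in\Ze$ set $J_t := \setbuilder{i\le n-1}{r_i\in[(1+\tfrac1d)^t,(1+\tfrac1d)^{t+1})}$. If $i<i'$ lie in the same $J_t$, then $\norm{p_i-p_{i'}} = r_i$, so the points $\setbuilder{p_i}{i\in J_t}$ have all their pairwise distances within a factor $1+\tfrac1d$ of each other; hence $\card{J_t}\le\pack{\NN}{1+\tfrac1d}\le(2+\tfrac1d)^d = O(2^d)$.

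The heart of the argument is to bound the number of non-empty buckets by $O(3^d d^2\log d)$. For each non-empty $J_t$ set $\iota_t := \min J_t$. Suppose $\iota_{t_1}>\iota_{t_2}$; then the radius estimate gives $r_{\iota_{t_1}}\le 2r_{\iota_{t_2}}$, which, comparing with the bucket intervals, forces $t_1 - t_2 \le m_0$ where $m_0 := 1 + \ln 2/\ln(1+\tfrac1d) = O(d)$. Thus for two buckets more than $m_0$ apart, the one with the larger radius has the smaller index. Partition $\Ze$ into $m$ residue classes for a suitable integer $m = O(d)$; within each class, the representatives $\iota_t$ listed in increasing index order have strictly decreasing radii. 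Now for any index sequence $i_1<\dots<i_L$ with $r_{i_1}\ge\dots\ge r_{i_L}$, the homothets $B_{i_1},\dots,B_{i_L}$ form a pairwise intersecting Minkowski arrangement of $K$ --- they pairwise intersect, and for $a<b$ one has $\norm{p_{i_a}-p_{i_b}} = r_{i_a}\ge r_{i_b}$, so neither centre lies in the interior of the other homothet --- whence $L\le\kappa(K) = O(3^d d\log d)$ by Theorem~\ref{thm1}. Applying this in each of the $O(d)$ residue classes bounds the number of non-empty buckets by $O(3^d d^2\log d)$, and therefore $n-1 = \sum_t\card{J_t}\le O(3^d d^2\log d)\cdot O(2^d) = O(6^d d^2\log d)$.

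The step I expect to be the main obstacle is the last one: the estimate $r_j\le 2r_i$ does not on its own stop the radii from descending through unboundedly many scales, so one must squeeze out of it the monotonicity of the bucket representatives once the buckets are well separated, and then feed exactly those representatives into Theorem~\ref{thm1}. The reason the exponent comes out as $6$ rather than $9$ is the choice of bucket ratio $1+\tfrac1d$: it is coarse enough that two buckets become genuinely index-ordered after only $m_0 = O(d)$ steps, yet fine enough that each individual bucket is a near-equilateral set of size $O(2^d)$ instead of a ratio-$2$ set of size $O(3^d)$.
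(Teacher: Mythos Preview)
Your argument is correct. The ingredients are the same as in the paper --- the radius estimate $r_j\le 2r_i$ via the triangle inequality, Theorem~\ref{thm1} applied to an index-increasing, radius-decreasing subfamily, and the packing bound~\eqref{eq:packingbound} at ratio $1+\tfrac1d$ --- but the way you assemble them is organised differently. The paper applies the Erd\H{o}s--Szekeres theorem to the sequence $r_1,\dots,r_{n-1}$: the longest decreasing subsequence is a pairwise intersecting Minkowski arrangement, hence has length at most $\kappa(K)=O(3^d d\log d)$; for the longest increasing subsequence one observes (using $p_n$) that all its radii lie within a factor~$2$ of the smallest, and then a logarithmic cut into $d$ layers of ratio $2^{1/d}$ bounds its length by $O(2^d d)$. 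The product of these two bounds gives $n-1=O(6^d d^2\log d)$.

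Your route bypasses Erd\H{o}s--Szekeres: you bucket \emph{all} radii at ratio $1+\tfrac1d$ up front, bound each bucket by $O(2^d)$, and then control the number of nonempty buckets by the residue-class trick, which manufactures the needed monotone subsequences directly from the global estimate $r_j\le 2r_i$. This is a legitimate alternative decomposition and arguably more self-contained; the paper's version, on the other hand, makes the two contributions $O(3^d d\log d)$ and $O(2^d d)$ transparently visible as the decreasing and increasing parts of the sequence. Both yield the identical final bound.
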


\begin{proof}
Let $D\subseteq\{1,2,\dots,n-1\}$ be the index set of a longest decreasing subsequence of $r_1,r_2,\dots,r_{n-1}$. Thus, if $i,j\in D$ with $i<j$, then $r_i\geq r_j$.
Then $\setbuilder{p_i+r_i K}{i\in D}$ is a pairwise intersecting Minkowski arrangement, and by Theorem~\ref{thm1}, $\card{D}=O(3^d d\log d)$.

Next, let $I\subseteq\{1,2,\dots,n-1\}$ be the index set of a longest increasing subsequence of $r_1,r_2,\dots,r_{n-1}$, that is, if $i,j\in I$ with $i< j$, then $r_i\leq r_j$.
Let $m:=\min I$.
By the triangle inequality we have $r_i\leq 2r_m$ for any $i\in I$.
Indeed, without loss we may assume $i\neq m$, and then, since $m<i<n$,
$\norm{p_i-p_m}_K=\norm{p_n-p_m}_K=r_m$ and $\norm{p_n-p_i}_K=r_i$, from which $r_i\leq 2r_m$ follows.

We now use the 
same ``logarithmic cut'' method as in the proof of 
Theorem~\ref{thm:kappauppernonsymm}.
Choose $N\in\Ne$.
For each $k=1,\dots,N$, let $I_k:=\setbuilder{i\in I}{r_i/r_m\in[2^{(k-1)/N},2^{k/N})}$.
Then $\norm{p_i-p_j}_K\in[2^{(k-1)/N},2^{k/N})$ for any distinct $i,j\in I_k$, hence \[\card{I_k}\leq P(K,2^{1/N})\leq(1+2^{1/N})^d\] by \eqref{eq:packingbound}, and \[\card{I}=\sum_{k=1}^N\card{I_k}\leq N(1+2^{1/N})^d.\]
We now choose an optimal value $N:=d$ to obtain $\card{I}\leq d(1+2^{1/d})^d=O(2^d d)$.

By the Erd\H{o}s--Szekeres Theorem \cite{ESz35}, any sequence of real numbers for which all decreasing subsequences are of length at most $s$ and all increasing subsequences are of length at most $t$, has length at most $st$.
It follows that $n-1\leq\card{D}\cdot\card{I}$, hence
\[n = O(3^d d \log d)O(2^d d)=O(6^d d^2\log d),\]
as claimed.
\end{proof}

\section{\texorpdfstring{Bounding $\kappa'$ from below}{Bounding K' from 
below}}\label{sec:lowerbds}
In this section we prove Theorems~\ref{thm:kappalowerbdassym} and \ref{thm:hlower}, by extending a lower bound for the strict Hadwiger number $H'(K)$ by Arias-de Reyna, Ball, and Villa \cite{ABV98}*{Theorem~1} to non-symmetric convex bodies.
Earlier, Bourgain  \cite{FL94} showed an exponential 
lower bound to $H'(K)$ for $o$-symmetric $K$ that depends only on the dimension of $K$.
(This argument was also independently discovered by Talata \cite{T98}.)
The key tool used by Bourgain and Talata is Milman's Quotient of Subspace Theorem 
(or, in short, the QS Theorem) \cite{Mi85}. 

In order to obtain a lower bound on $\kappa(K,p)$ in the non-symmetric case, 
it is possible to use a non-symmetric 
version of the QS Theorem (see Milman and Pajor \cite{MiPa00} or Rudelson \cite{Rudelson}),
or one may generalize the approach from \cite{ABV98}.
The first approach does not lead to a concrete lower bound, and we will follow the second.
However, neither approach is straightforward. One obstacle is that 
$p$ may not coincide with the centroid of $K$.
To bypass this problem, we use the following topological result.
\begin{lem}
\label{lem:cpl}
 Let $K$ be a convex body in $\Red$. Then there is a $(d-1)$-dimensional 
linear subspace $H$ of $\Red$ such that the centroid of the orthogonal 
projection of $K$ onto $H$ is the origin.
\end{lem}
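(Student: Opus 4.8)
The plan is to deduce this from a topological argument of hairy-ball / Borsuk--Ulam type, applied to an auxiliary vector field on the sphere. Assume $d\ge 2$, the case $d=1$ being trivial. For $u\in\Sed$ let $u^\perp$ be the hyperplane orthogonal to $u$, let $\pi_u$ be the orthogonal projection of $\Red$ onto $u^\perp$, and set $f(u)\in u^\perp\subseteq\Red$ to be the centroid (barycentre with respect to Lebesgue measure) of the $(d-1)$-dimensional convex body $\pi_u(K)$. I would record three properties of $f$. It is continuous: $u\mapsto\pi_u(K)$ is continuous in the Hausdorff metric, $\pi_u(K)$ stays full-dimensional in $u^\perp$ since $K$ is full-dimensional, and the centroid depends continuously on a full-dimensional convex body. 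It is \emph{even}, $f(-u)=f(u)$, because $u^\perp=(-u)^\perp$ and $\pi_u=\pi_{-u}$. And it is \emph{tangent}, $f(u)\in u^\perp$, simply because $\pi_u(K)\subseteq u^\perp$. It then suffices to produce $u_0\in\Sed$ with $f(u_0)=o$, for then $H:=u_0^\perp$ is the desired subspace.

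To find such a $u_0$ I would argue by contradiction: suppose $f$ is nowhere zero and put $g(u):=f(u)/\norm{f(u)}_2$, a continuous map $\Sed\to\Sed$ with $\langle g(u),u\rangle=0$ and $g(-u)=g(u)$. First, $g$ is homotopic to the identity of $\Sed$: the map $(u,t)\mapsto\cos(\tfrac{\pi}{2}t)\,u+\sin(\tfrac{\pi}{2}t)\,g(u)$ for $t\in[0,1]$ takes values in $\Sed$ (its squared norm equals $\cos^2+\sin^2=1$ because $u\perp g(u)$ and both are unit vectors) and connects $\operatorname{id}_{\Sed}$ at $t=0$ to $g$ at $t=1$, so $g\simeq\operatorname{id}_{\Sed}$. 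Second, since $g$ is even it factors through the double cover $q\colon\Sed\to\mathbb{RP}^{d-1}$, say $g=\bar g\circ q$ with $\bar g\colon\mathbb{RP}^{d-1}\to\Sed$ continuous.

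The contradiction would then come from singular homology with $\Ze/2\Ze$--coefficients in the top degree $d-1$, where $H_{d-1}(\Sed;\Ze/2\Ze)\cong H_{d-1}(\mathbb{RP}^{d-1};\Ze/2\Ze)\cong\Ze/2\Ze$. On one hand, $g\simeq\operatorname{id}_{\Sed}$ forces $g_*$ to be the identity of $\Ze/2\Ze$. On the other hand, $g_*=\bar g_*\circ q_*$, and $q$ is a $2$-sheeted covering, hence has mod-$2$ degree $0$, so $q_*$ annihilates the fundamental class, i.e.\ $q_*=0$ on $H_{d-1}(\,\cdot\,;\Ze/2\Ze)$. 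Thus the identity of $\Ze/2\Ze$ would equal the zero map, which is absurd. Hence $f$ has a zero, and the lemma follows.

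I expect the crux to be the case when $d$ is even, so that $\Sed$ is an odd-dimensional sphere: there the hairy-ball theorem is useless (odd spheres carry nowhere-zero tangent vector fields), and one must exploit that $f$ is \emph{even}, not merely tangent. The factorization of $g$ through $\mathbb{RP}^{d-1}$, combined with the vanishing of the mod-$2$ degree of the double cover, is exactly the obstruction that rules this out; for $d$ odd the same argument works verbatim, and in fact the hairy-ball theorem alone already yields a zero of $f$. The only other point needing care, though it is routine, is the continuity of $u\mapsto f(u)$, which rests on standard facts about projections of convex bodies and on continuity of the centroid under Hausdorff convergence of full-dimensional convex bodies.
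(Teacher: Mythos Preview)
Your proof is correct and follows essentially the same route as the paper's: define the centroid map $f$ on $\Sed$, assume it is nowhere zero, normalize to a continuous even tangent map $g\colon\Sed\to\Sed$, observe that $g$ is homotopic to the identity, and derive a contradiction from the evenness of $g$. The only cosmetic difference is in the final step: the paper quotes the standard fact (Hatcher, Proposition~2.30) that an even self-map of $\Sed$ has even integer degree, whereas you unpack this by factoring $g$ through $\mathbb{RP}^{d-1}$ and computing in mod-$2$ homology---which is exactly how that fact is proved.
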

Statements similar to this lemma are known (see for instance \cite{Iz14}).
The lemma itself is surely also known.
However, since we could not find a reference, we include its simple proof.
\begin{proof}
For any unit vector $u\in\Sed$, let $f(u)$ be the centroid of the orthogonal 
projection of $K$ onto $u^\perp$.
We need to show that $f(u)=o$ for some $u\in\Sed$.
Suppose not.
Then $\un{f}\colon\Sed\to\Sed$ defined by $\un{f}(u)=f(u)/\norm{f(u)}_2$ is a 
continuous, even mapping such that $\langle u, \un{f}(u)\rangle=0$ for all 
$u\in\Sed$.
Since $\un{f}$ is even, its degree is even (see for instance 
\cite{Ha02}*{Proposition~2.30}).
Also, $\un{f}(u)\neq -u$ for all $u\in\Sed$.
It follows that $\un{f}$ is homotopic to the 
identity map, which has degree $1$, a contradiction.
\end{proof}
We briefly outline how this lemma can be combined with the non-symmetric 
QS Theorem to obtain that $h^{\prime}(K)>c^d$ for some universal constant $c>1$.
Later on in this section, we will obtain more explicit bounds 
(Theorems~\ref{thm:kappalowerbdassym} and \ref{thm:hlower}) using the main 
result of \cite{ABV98}.

First, the non-symmetric version of the QS Theorem (\cite{MiPa00}*{Theorem~9} and \cite{Rudelson}*{Theorem~4}), combined with Lemma~\ref{lem:cpl} yields that 
for any convex body $K$ in $\Red$, there is a roughly $(d/2)$-dimensional 
subspace $E$ and an origin centered ellipsoid ${\mathcal E}$ in $E$, such that 
for an appropriate projection $P$ of $\Red$, we have 
${\mathcal E} \subseteq P(K)\cap E \subseteq c{\mathcal E}$ with some universal 
constant $c$.
By a theorem of Milman \cite{M71} (see also \cite{MS86}*{Section~4.3}), we can 
take a $C(d/2)$-dimensional subspace $E^\prime$ of $E$ such that
${\mathcal E}^\prime:={\mathcal E}\cap E^\prime \subseteq P(K)\cap 
E^\prime \subseteq 1.1{\mathcal E}^\prime$, for 
a universal constant $C>0$. 
Although this is stated only for symmetric bodies $K$ in \cite{MS86}, the proof 
works in the non-symmetric case as well.

Now, we can follow the proof of the symmetric case (Theorem~4.3) in \cite{FL94} 
closely. 
There are exponentially many points on the relative boundary of 
$K^\prime:=P(K)\cap E^\prime$ such that the distance (with respect to the 
slightly non-symmetric norm $\norm{\cdot}_{K^{\prime}}$ on $E^\prime$) 
between any two points is at least $1.21$. 
Let $X$ be the set of these points. For every $x\in X$, choose a point $p\in 
\bd K$ such that $P(p)=x$. 
These points satisfy Definition~\ref{defn:Hadwiger2}.

Before we prove Theorems~\ref{thm:kappalowerbdassym} and \ref{thm:hlower}, we state a non-symmetric version of \cite{ABV98}*{Theorem~1}.
\begin{thm}\label{thm:ABVnonsymmetric}
Let $K$ be a convex body in $\Red$ with $o\in\inter(K)$.
Let $\mu$ denote the uniform \textup{(}with respect to Lebesgue measure\textup{)} probability 
measure on $K$. 
Then, for any $0<t<\sqrt{2}$,
\[
F(t) := \mu\otimes \mu\setbuilder{(x,y)\in K\times K}{\norm{x-y}_K \leq t}
\leq\left(\frac{t^2(4-t^2)}{4}\right)^{d/2}.
\]
\end{thm}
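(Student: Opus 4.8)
The plan is to mimic the symmetric argument of Arias-de Reyna, Ball, and Villa, reducing everything to a one-dimensional integral inequality via the Brunn--Minkowski-type machinery that handles radial slices. The quantity $F(t)$ is a ``small-ball'' probability: the chance that two independent uniform points of $K$ are within $K$-distance $t$ of each other. First I would fix $x\in K$ and estimate the inner integral, i.e. the $\mu$-measure of $\setbuilder{y\in K}{\norm{x-y}_K\le t} = K\cap(x+tK)$. The point is that this intersection is a convex body, and we want to bound $\vol\bigl(K\cap(x+tK)\bigr)/\vol(K)$ in a way that, after integrating over $x$, produces the clean bound $\bigl(t^2(4-t^2)/4\bigr)^{d/2}$.

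The heart of the matter is a pointwise convexity estimate. For a fixed direction, consider how the ``overlap'' function behaves: if $z\in K\cap(x+tK)$, then writing $z = x + t w$ with $w\in K$, both $z\in K$ and $w\in K$; since $z = (1-t)\cdot\frac{z-tw}{1-t} \cdot$ (careful bookkeeping here) one gets that a suitable convex combination built from $x$, $z$, $w$ also lies in $K$. Concretely, I expect the key step to be: whenever $x,z,w\in K$ satisfy $z=x+tw$, the point $\frac{1}{1+t}x + \frac{t}{1+t}\,(\text{something})$ lies in $K$, which forces a constraint linking the location of $x$ to the size of the slice. This is exactly where the factor $4-t^2 = (2-t)(2+t)$ should emerge — it is the ``budget'' of room left in $K$ after accounting for a chord of relative length $t$. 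The cleanest formulation is probably: for each $x$,
\[
\frac{\vol\bigl(K\cap(x+tK)\bigr)}{\vol(K)} \le t^d \cdot g(x)
\]
for an explicit $g$, and then $\int_K g(x)\,d\mu(x)$ is controlled by a second Brunn--Minkowski application. Alternatively, and perhaps more robustly, one linearizes: $F(t) = \Pr[\norm{x-y}_K\le t]$, and one shows the ``antipodal'' random variable $\norm{x-y}_K$ stochastically dominates something whose small-ball behavior is computable, using that $\frac{x+y}{2}\in K$ and $\frac{x-y}{t}\in K$ are ``almost independent'' in the Brunn--Minkowski sense.

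I would then assemble the bound. After the slice estimate, integrating over $x\in K$ with respect to $\mu$ reduces to bounding a quantity of the form $\int_K \phi(x)^d\,d\mu(x)$ where $\phi(x)\le 1$ measures how much of $K$ sits ``beyond'' $x$ in directions available to the $tK$-ball; a change of variables (scaling by the factor that makes the two copies of $K$ symmetric about the relevant point) plus one more volume comparison yields $\bigl(\frac{t^2(4-t^2)}{4}\bigr)^{d/2}$. The restriction $t<\sqrt2$ is precisely what keeps $4-t^2 > 2$ and, more importantly, keeps the relevant convex combinations genuinely inside $K$ without degeneracy; I would check this threshold is forced by the convexity step and not an artifact. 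Note one subtlety absent from the symmetric case: since $K$ is only assumed to contain $o$ in its interior and $\norm{\cdot}_K$ is an asymmetric norm, $\norm{x-y}_K$ is \emph{not} symmetric in $x,y$, so the region $\setbuilder{(x,y)}{\norm{x-y}_K\le t}$ is not symmetric under swapping coordinates; however, the Brunn--Minkowski argument never uses such symmetry — it only uses convexity of $K$ — so the proof should go through verbatim.

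**The main obstacle** I anticipate is getting the pointwise convexity estimate (the slice bound) in exactly the form that integrates to the stated exponent, rather than a weaker one off by a constant base. The symmetric proof of \cite{ABV98} uses the midpoint $\frac{x+y}{2}\in K$ and the difference $\frac{x-y}{2}\in\frac12(K-K)=K$ symmetrically; in our setting we must be careful to always land the ``difference'' vector inside $K$ itself (not $\frac12(K-K)$), which is where the asymmetric normalization $\norm{\cdot}_K$ forces us to use $x+tK$ rather than a symmetric neighborhood. I would therefore spend most of the effort verifying that the inclusion ``$z=x+tw$, $x,z,w\in K$ $\Rightarrow$ an explicit convex combination is in $K$'' is tight enough, likely by testing it on a simplex to confirm the $4-t^2$ factor is not improvable, and only then run the two Brunn--Minkowski applications to finish.
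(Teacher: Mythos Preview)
Your proposal misses the actual mechanism of the Arias-de Reyna--Ball--Villa proof, and the Brunn--Minkowski route you sketch is very unlikely to produce the sharp base $t^2(4-t^2)/4$. The paper (and \cite{ABV98}) does \emph{not} bound the slice $\vol(K\cap(x+tK))$ pointwise and then integrate. Instead, it writes
\[
\vol(K)^2\,F(t)=\iint \chi_K(x)\,\chi_K(y)\,\chi_{tK}(x-y)\,dx\,dy=(\chi_K\ast\chi_{-K}\ast\chi_{tK})(0),
\]
and then applies the \emph{sharp} Young (Beckner/Brascamp--Lieb) inequality to this threefold convolution. The constants in sharp Young are exactly what generate the factor $\bigl(t^2(4-t^2)/4\bigr)^{d/2}$; this is an analytic, not a geometric, miracle. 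No pointwise convexity estimate of the type you describe is involved, and your worry that a slice-by-slice Brunn--Minkowski argument would lose a constant in the base is well founded: I do not know any way to reach the exact exponent that way.

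The non-symmetric adaptation is then a one-line observation, which is all the paper records: when $K\neq -K$, the identity above has $\chi_{-K}$ in place of a second $\chi_K$, but sharp Young only needs the $L^p$ norms $\norm{\chi_K}_p=\norm{\chi_{-K}}_p=\vol(K)^{1/p}$ and $\norm{\chi_{tK}}_q=(t^d\vol(K))^{1/q}$, which are unchanged. So the entire \cite{ABV98} computation goes through verbatim. Your intuition that ``the Brunn--Minkowski argument never uses symmetry'' is pointed at the wrong tool; replace it with ``sharp Young never uses symmetry of the convolved functions, only their $L^p$ norms,'' and you have the proof.
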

The proof of Theorem~\ref{thm:ABVnonsymmetric} is virtually the same as in 
\cite{ABV98}.
We recall the first part of this proof, which is where the only (slight) difference lies.
In that proof, $\mu\otimes \mu\setbuilder{(x,y)\in K\times 
K}{\norm{x-y}_K \leq t}$ is written as a threefold convolution.
In the non-symmetric case it is easy to see that
for any $t\geq 0$,
\[
F(t) = \mu\otimes \mu\setbuilder{(x,y)\in K\times 
K}{\norm{x-y}_K \leq t}=(\chi_K\ast \chi_{-K} \ast \chi_{tK})(0),
\]
where $\chi_A$ denotes the indicator function of a set $A$. 
The only difference with the symmetric case is the occurrence of $\chi_{-K}$ instead of $\chi_K$ in the right-hand side.
This does not affect the rest of the proof in \cite{ABV98}, which is 
an application of a 
strong form of Young's inequality to this threefold convolution, and which 
we do not repeat.

\begin{proof}[Proof of Theorem~\ref{thm:kappalowerbdassym}]
Let $m=\frac{1}{4}(2/\sqrt{3})^d$, and choose $2m$ points $x_1,\dots,x_{2m}$ 
independently and uniformly from $K$.
Then the expected number of ordered pairs $(x_i,x_j)$, $i\neq j$, such that 
$\norm{x-y}_K\leq 1$, equals $2m(2m-1)F(1)$ by linearity of expectation.
This quantity is at most $m$ by Theorem~\ref{thm:ABVnonsymmetric} and the 
choice of $m$.
Thus, there exists a choice of points $x_1,\dots,x_{2m}$ from $K$ such that 
$\norm{x_i-x_j}_K > 1$ for all except at most $m$ pairs $(i,j)$, $i\neq j$.
For each such pair, delete one of the points.
We end up with $m$ points $x_1,\dots,x_m$, say, such that $\norm{x_i-x_j}_K> 1$ 
for all distinct $i,j$.
It follows that 
$-x_1+K,\ldots,-x_m+K$ is a strict Minkowski arrangement.
This family of translates of $K$ is clearly pairwise intersecting, since all 
members contain the origin.
\end{proof}
If $K$ is $o$-symmetric, it follows from Corollary~\ref{cor:bowarrow} that if $a,b\in K$ satisfy $\norm{a-b}_K>1$, then $\norm{\un{a}-\un{b}}_K>1$.
We therefore obtain the lower bound $h'(K)=H'(K)=\Omega((2/\sqrt{3})^d)$ by normalizing the points $x_i$ in the proof above.
When $K$ is not $o$-symmetric, the probabilistic argument above can be adapted to obtain the slightly worse lower bound $h'(K)=\Omega((2/\sqrt{3})^d/d^2)$ of Theorem~\ref{thm:hlower}.
The proof is technically more involved, and the details are as follows.
\begin{proof}[Proof of Theorem~\ref{thm:hlower}]
We first assume that $o$ is the centroid of $K$.
By Lemma~\ref{lem:asym}, $\theta(K,o)\leq d$, hence $\norm{-x}_K\leq d\norm{x}_K$ for all $x\in\Red$.
Let $k\in\Ne$, to be fixed later,
and 
choose $k$ points $x_1,\dots,x_{k}$ independently and uniformly from $K$.
Let $\delta$ be such that $e^{\delta d} = (d+4)/(d+1)$ (thus, $\delta\sim 
3/d^2$).
Then the expected number of points $x_i$ such that $\norm{x_i}_{K}\leq 
1-\delta$ 
(we call these points \emph{short}) equals 
\[(1-\delta)^d k < e^{-\delta d}k = \biggl(1-\frac{3}{d+4}\biggr)k.\]
We say that an ordered pair $(x_i,x_j)$, $i\neq j$, is \emph{close} if 
$\norm{x_i-x_j}_K \leq 1 + (d+1)\delta$.
Then by Theorem~\ref{thm:ABVnonsymmetric}, the expected number of ordered close 
pairs is less than
\begin{align*}
&\mathrel{\phantom{=}} 
k^2\biggl(\frac{(1+(d+1)\delta)^2(4-(1+(d+1)\delta)^2)}{4}\biggr)^{d/2}\\
&< k^2\biggl(\frac{\sqrt{3}}{2}\biggr)^d
\biggl(1+\frac43(d+1)\delta\biggr)^{d/2}\\
&< k^2\biggl(\frac{\sqrt{3}}{2}\biggr)^d e^{2\delta d(d+1)/3} = 
k^2\biggl(\frac{\sqrt{3}}{2}\biggr)^d \biggl(\frac{d+4}{d+1}\biggr)^{2(d+1)/3} \\
&< e^2 k^2 \biggl(\frac{\sqrt{3}}{2}\biggr)^d.
\end{align*}
Thus, if we delete each short point, as well as one member of each close pair, 
then the expected number $m$ of points left is at least
\[
m\geq
k\biggl(\frac{3}{d+4} - \biggl(\frac{\sqrt{3}}{2}\biggr)^d e^2 k\biggr).
\]
To maximize this quadratic expression in $k$, we set $k = 
\frac12(2/\sqrt{3})^d\frac{3}{e^2(d+4)}$.
Thus, there exist at least
\[ m \geq \biggl(\frac{2}{\sqrt{3}}\biggr)^d\frac{9}{4e^2(d+4)^2} \]
points
$x_1,\ldots,x_m\in K$ such that $\norm{x_i}> 1-\delta$ for each $i$ and 
$\norm{x_i-x_j}_K > 1+(d+1)\delta$ for each pair of distinct $i,j$. 
Normalize these points to obtain
\[\un{x}_1=\frac{1}{\norm{x_1}_{K}}x_1, \dots, 
\un{x}_m=\frac{1}{\norm{x_m}_{K}}x_m\in\bd K.\]
Note that $\norm{\un{x}_i-x_i}_K<\delta$, hence $\norm{x_i-\un{x}_i}_K < 
d\delta$.
By the triangle inequality, for distinct $i,j$ we have
\begin{align*}
    \norm{\un{x}_i-\un{x}_j}_K &\geq \norm{x_i-x_j}_K - \norm{x_i-\un{x}_i}_K - 
\norm{\un{x}_j-x_j}_K \\
    &> 1+(d+1)\delta - d\delta - \delta = 1.
\end{align*}
Therefore, the requirements of Definition~\ref{defn:Hadwiger2} are satisfied.

Next, we reduce the case when $o$ is an arbitrary point in $\inter(K)$ to the 
case where $o$ is the centroid of $K$.
By Lemma~\ref{lem:cpl}, there is an orthogonal projection $\pi\colon\Red\to H$ 
where $H$ is a linear subspace 
of dimension $d-1$ such that $\pi(K)$ has centroid $o$.
By what was shown above, there exist $m\geq 
\left(2/\sqrt{3}\right)^{d-1}9/(4e^2(d+3)^2)$ points 
$y_1,\dots,y_m\in\bd \pi(K)$ such that $\norm{y_1-y_j}_{\pi(K)}> 1$ for all 
distinct $i,j$.
For each $i$, choose $p_i\in\pi^{-1}(y_i)\in K$.
Since $\norm{x}_K\geq\norm{\pi(x)}_{\pi(K)}$ for all $x\in\Red$, we obtain that 
$p_1,\ldots,p_m\in\bd K$ satisfy $\norm{p_i-p_j}_K\geq\norm{y_1-y_j}_{\pi(K)}> 
1$ for all distinct $i,j$.
This finishes the proof of Theorem~\ref{thm:hlower} in the general case.
\end{proof}

\begin{bibdiv}
\begin{biblist}

\bib{ABV98}{article}{
      author={Arias-de Reyna, Juan},
      author={Ball, Keith},
      author={Villa, Rafael},
       title={Concentration of the distance in finite-dimensional normed
  spaces},
        date={1998},
        ISSN={0025-5793},
     journal={Mathematika},
      volume={45},
      number={2},
       pages={245\ndash 252},
         url={http://dx.doi.org/10.1112/S0025579300014182},
      review={\MR{1695717}},
}

\bib{BSz04}{article}{
      author={B\"or\"oczky, K\'aroly},
      author={Szab\'o, L\'aszl\'o},
       title={Minkowski arrangements of spheres},
        date={2004},
        ISSN={0026-9255},
     journal={Monatsh. Math.},
      volume={141},
      number={1},
       pages={11\ndash 19},
         url={http://dx.doi.org/10.1007/s00605-002-0002-5},
      review={\MR{2109518}},
}

\bib{B47}{article}{
   author={Busemann, Herbert},
   title={The isoperimetric problem in the Minkowski plane},
   journal={Amer. J. Math.},
   volume={69},
   date={1947},
   pages={863--871},
   issn={0002-9327},
   review={\MR{0023552}},
   doi={10.2307/2371807},
}

\bib{ESz35}{article}{
      author={Erd{\H{o}}s, Paul},
      author={Szekeres, Gy{\:o}rgy},
       title={A combinatorial problem in geometry},
        date={1935},
        ISSN={0010-437X},
     journal={Compositio Math.},
      volume={2},
       pages={463\ndash 470},
         url={http://www.numdam.org/item?id=CM_1935__2__463_0},
      review={\MR{1556929}},
}

\bib{FT65}{article}{
      author={Fejes~T{\'o}th, L.},
       title={Minkowskian distribution of discs},
        date={1965},
        ISSN={0002-9939},
     journal={Proc. Amer. Math. Soc.},
      volume={16},
       pages={999\ndash 1004},
      review={\MR{0180921}},
}

\bib{FT67}{article}{
      author={Fejes~T{\'o}th, L.},
       title={Minkowskian circle-aggregates},
        date={1967},
        ISSN={0025-5831},
     journal={Math. Ann.},
      volume={171},
       pages={97\ndash 103},
      review={\MR{0221386}},
}

\bib{FT95}{article}{
      author={Fejes~T{\'o}th, L.},
       title={Research problem},
        date={1995},
        ISSN={0031-5303},
     journal={Period. Math. Hungar.},
      volume={31},
      number={2},
       pages={165\ndash 166},
         url={http://dx.doi.org/10.1007/BF01876492},
      review={\MR{1553673}},
}

\bib{FT99}{article}{
      author={Fejes~T\'oth, L.},
       title={Minkowski circle packings on the sphere},
        date={1999},
        ISSN={0179-5376},
     journal={Discrete Comput. Geom.},
      volume={22},
      number={2},
       pages={161\ndash 166},
         url={http://dx.doi.org/10.1007/PL00009451},
      review={\MR{1698538}},
}

\bib{FL94}{article}{
       label={F{\"u}L94},
      author={F{\"u}redi, Zolt{\'a}n},
      author={Loeb, Peter~A.},
       title={On the best constant for the {B}esicovitch covering theorem},
        date={1994},
        ISSN={0002-9939},
     journal={Proc. Amer. Math. Soc.},
      volume={121},
      number={4},
       pages={1063\ndash 1073},
         url={http://dx.doi.org/10.2307/2161215},
      review={\MR{1249875 (95b:28003)}},
}

\bib{Gru63}{article}{
      author={Gr{\"u}nbaum, Branko},
       title={Measures of symmetry for convex sets},
        date={1963},
        book={
       title={Convexity},
      series={Proc. {S}ympos. {P}ure {M}ath.}, 
      volume={VII},
   publisher={Amer. Math. Soc.},
     address={Providence, R.I.},
        },
       pages={233\ndash 270},
      review={\MR{MR0156259 (27 \#6187)}},
}

\bib{HJLM93}{article}{
      author={Harary, Frank},
      author={Jacobson, Michael~S.},
      author={Lipman, Marc~J.},
      author={McMorris, F.~R.},
       title={Abstract sphere-of-influence graphs},
        date={1993},
        ISSN={0895-7177},
     journal={Math. Comput. Modelling},
      volume={17},
      number={11},
       pages={77\ndash 83},
         url={http://dx.doi.org/10.1016/0895-7177(93)90257-Y},
      review={\MR{1236512}},
}

\bib{Ha02}{book}{
      author={Hatcher, Allen},
       title={Algebraic topology},
   publisher={Cambridge University Press},
     address={Cambridge},
        date={2002},
        ISBN={0-521-79160-X; 0-521-79540-0},
      review={\MR{1867354 (2002k:55001)}},
}

\bib{Iz14}{article}{
      author={Izmestiev, Ivan},
       title={Fitting centroids by a projective transformation},
        date={2014},
      eprint={https://arxiv.org/abs/1409.6176},
         url={"https://arxiv.org/abs/1409.6176"},
}

\bib{MM92}{article}{
      author={Malni{\v{c}}, A.},
      author={Mohar, B.},
       title={Two results on antisocial families of balls},
        date={1992},
  conference={
                title={Fourth {C}zechoslovakian {S}ymposium on {C}ombinatorics,
                       {G}raphs and {C}omplexity},
              address={{P}rachatice},
                 date={1990}
             },
        book={
                 series={Ann. Discrete Math.},
                 volume={51},
              publisher={North-Holland},
                address={Amsterdam}
             },
       pages={205\ndash 207},
         url={http://dx.doi.org/10.1016/S0167-5060(08)70629-0},
      review={\MR{1206267}},
}

\bib{M63}{article}{
   author={Mel'nikov, M. S.},
   title={Dependence of volume and diameter of sets in an $n$-dimensional
   Banach space},
   language={Russian},
   journal={Uspehi Mat. Nauk},
   volume={18},
   date={1963},
   number={4 (112)},
   pages={165--170},
   issn={0042-1316},
   review={\MR{0156263}},
}

\bib{M71}{article}{
      author={Milman, V.~D.},
       title={A new proof of {A}. {D}voretzky's theorem on cross-sections of
  convex bodies},
        date={1971},
        ISSN={0374-1990},
     journal={Funkcional. Anal. i Prilo\v zen.},
      volume={5},
      number={4},
       pages={28\ndash 37},
      review={\MR{0293374}},
}

\bib{Mi85}{article}{
      author={Milman, V.~D.},
       title={Almost {E}uclidean quotient spaces of subspaces of a
  finite-dimensional normed space},
        date={1985},
        ISSN={0002-9939},
     journal={Proc. Amer. Math. Soc.},
      volume={94},
      number={3},
       pages={445\ndash 449},
         url={http://dx.doi.org/10.2307/2045232},
      review={\MR{787891 (86g:46025)}},
}

\bib{MiPa00}{article}{
      author={Milman, V.~D.},
      author={Pajor, A.},
       title={Entropy and asymptotic geometry of non-symmetric convex bodies},
        date={2000},
        ISSN={0001-8708},
     journal={Adv. Math.},
      volume={152},
      number={2},
       pages={314\ndash 335},
         url={http://dx.doi.org/10.1006/aima.1999.1903},
      review={\MR{1764107 (2001e:52004)}},
}

\bib{MS86}{book}{
      author={Milman, Vitali~D.},
      author={Schechtman, Gideon},
       title={Asymptotic theory of finite-dimensional normed spaces},
      series={Lecture Notes in Mathematics},
   publisher={Springer-Verlag},
     address={Berlin},
        date={1986},
      volume={1200},
        ISBN={3-540-16769-2},
      review={\MR{856576}},
}

\bib{Mi1897}{article}{
      author={{Minkowski}, H.},
       title={{Allgemeine Lehrs\"atze \"uber die convexen Polyeder.}},
    language={German},
        date={1897},
     journal={{Nachr. Ges. Wiss. G\"ottingen, Math.-Phys. Kl.}},
      volume={1897},
       pages={198\ndash 219},
}

\bib{NSS17}{article}{
      author={Nasz\'odi, M\'arton},
      author={Sandoval, Leonardo~Mart\'inez},
      author={Smorodinsky, Shakhar},
       title={Bounding a global red-blue proportion using local conditions},
        date={2017},
        book={
            title={Proceedings of the 33rd European Workshop on Computational 
                   Geometry (EuroCG2017)},
            publisher={Malm\"o University}
        },
       pages={213--217},
      eprint={http://csconferences.mah.se/eurocg2017/proceedings.pdf},
}

\bib{Polyanskii}{article}{
      author={Polyanskii, A.},
       title={Pairwise intersecting homothets of a convex body},
     journal={Discrete Math.},
        date={2017},
      volume={340},
       pages={1950\ndash 1956},
}

\bib{RoS57}{article}{
      author={Rogers, C.~A.},
      author={Shephard, G.~C.},
       title={The difference body of a convex body},
        date={1957},
        ISSN={0003-889X},
     journal={Arch. Math. (Basel)},
      volume={8},
       pages={220\ndash 233},
}

\bib{Rudelson}{article}{
      author={Rudelson, M.},
       title={Distances between non-symmetric convex bodies and the
  {$MM^\ast$}-estimate},
        date={2000},
        ISSN={1385-1292},
     journal={Positivity},
      volume={4},
      number={2},
       pages={161\ndash 178},
         url={http://dx.doi.org/10.1023/A:1009842406728},
      review={\MR{1755679}},
}

\bib{Swanepoel}{article}{
      author={Swanepoel, K.~J},
       title={Combinatorial distance geometry in normed spaces},
        date={2017},
        book={
       title={New Trends in Intuitive Geometry},
      series={Bolyai Soc. Math. Studies},
   publisher={Springer},
        },
      status={to appear},
      eprint={https://arxiv.org/abs/1702.00066},
         url={"https://arxiv.org/abs/1702.00066"},
}

\bib{T98}{article}{
      author={Talata, I.},
       title={Exponential lower bound for the translative kissing numbers of
  $d$-dimensional convex bodies},
        date={1998},
        ISSN={0179-5376},
     journal={Discrete Comput. Geom.},
      volume={19},
      number={3, Special Issue},
       pages={447\ndash 455},
        note={Dedicated to the memory of Paul Erd\H os},
      review={\MR{98k:52046}},
}

\bib{Ta05}{article}{
      author={Talata, Istv{\'a}n},
       title={On {H}adwiger numbers of direct products of convex bodies},
        book={
       title={Combinatorial and Computational Geometry},
      series={Math. Sci. Res. Inst. Publ.},
      volume={52},
   publisher={Cambridge Univ. Press},
     address={Cambridge},
       pages={517\ndash 528},
        },
        date={2005},
      review={\MR{2178337 (2006g:52030)}},
}

\end{biblist}
\end{bibdiv}

\end{document}